\newcommand{\be}{\begin{eqnarray}}
\newcommand{\ee}{\end{eqnarray}}
\newcommand{\beq}{\begin{equation}}
\newcommand{\eeq}{\end{equation}}
\newcommand{\beqn}{\begin{equation*}} 
\newcommand{\eeqn}{\end{equation*}}
\newcommand{\round}[1]{\lfloor#1\rfloor}
\DeclareMathOperator{\Var}{\mathrm{Var}}
\newtheorem{thm}{Theorem}[section]
\newtheorem{prop}[thm]{Proposition}
\newtheorem{cor}[thm]{Corollary}
\newtheorem{lem}[thm]{Lemma}
\newtheorem{defn}[thm]{Definition}
\theoremstyle{remark}
\newtheorem{remark}[thm]{Remark}
\newcommand\cB{{\mathcal B}}
\newcommand\cC{{\mathcal C}}
\newcommand\cE{{\mathcal E}}
\newcommand\cF{{\mathcal F}}
\newcommand\cL{{\mathcal L}}
\newcommand\cM{{\mathcal M}}
\newcommand\cN{{\mathcal N}}
\newcommand\cO{{\mathcal O}}
\newcommand\cT{{\mathcal T}}
\newcommand\bE{{\mathbb E}}
\newcommand\bN{{\mathbb N}}
\newcommand\bP{{\mathbb P}}
\newcommand\bR{{\mathbb R}}
\newcommand\bZ{{\mathbb Z}}
\newcommand\rd{{\mathrm d}}
\newcommand{\ve}{\varepsilon}
\def\bfT{\mathbf{T}}
\title[Central limit theorems for time-dependent intermittent maps]{Central limit theorems with a rate of convergence for time-dependent intermittent maps}
\keywords{Normal approximation, Stein's method, intermittency, time-dependent dynamical system, random dynamical system}
\thanks{2010 {\it Mathematics Subject Classification.} 37C60; 37D25; 60F05}
\author[Olli Hella]{Olli Hella}
\address[Olli Hella]{
Department of Mathematics and Statistics, P.O.\ Box 68, Fin-00014 University of Helsinki, Finland.}
\email{olli.hella@helsinki.fi}
\author[Juho Lepp\"anen]{Juho Lepp\"anen}
\address[Juho Lepp\"anen]{
 Laboratoire de Probabilités, Statistique et Modélisation (LPSM), CNRS, Sorbonne Université, Université de Paris, 4 Place Jussieu, 75005 Paris, France}
\email{leppanen@lpsm.paris}
\begin{document}
\maketitle

\begin{abstract} We study dynamical systems arising as time-dependent compositions of Pomeau-Manneville-type intermittent maps. We establish central limit theorems for appropriately scaled and centered Birkhoff-like partial sums, with estimates on the rate of convergence. For maps chosen from a certain parameter range, but without additional assumptions on how the maps vary with time,  we obtain a self-norming CLT provided that the variances of the partial sums grow sufficiently fast. When the maps are chosen randomly according to a shift-invariant probability measure, we identify conditions under which the quenched CLT holds, assuming fiberwise centering. Finally, we show a multivariate CLT for intermittent quasistatic systems. Our approach is based on Stein's method of normal approximation.
\end{abstract}

\subsection*{Acknowledgements} The authors would like to thank Mikko Stenlund for helpful comments on preliminary versions of the manuscript. OH was supported by the Academy of Finland, Jane and Aatos Erkko Foundation, and Emil Aaltosen Säätiö. JL was supported by DOMAST (University of Helsinki) and by the European Research Council (ERC) under the European Union's Horizon 2020 research and innovation programme (grant agreement No 787304).

\section{Introduction}

This paper is a continuation of the previous works \cite{Hella_2018,leppanen2017,HellaStenlund_2018} which dealt with statistical properties of time-dependent dynamical systems, such as those described by compositions of the form \(T_n \circ \cdots \circ T_1\) where each  \(T_n : X \to X\)  is a self-map of a probability space $(X, \cB,\mu)$. In \cite{leppanen2017}, the constituent maps $T_n$ were chosen to be Pomeau-Manneville-type intermittent maps $T_{\alpha_n} : [0,1] \to [0,1]$ with parameters $\alpha_n \in (0,1)$. The maps $T_{\alpha_n}$ are expanding but have a common neutral fixed point at the origin, whose influence becomes more significant as $\alpha_n$ increases. A functional correlation bound was shown to facilitate controlling integrals of the form $\int F \circ (T_{\alpha_n} \circ \cdots \circ T_{\alpha_1})_{n = 1}^m \, d \mu$ , where $F$ is not necessarily a product of one-dimensional observables (as in multicorrelation bounds) but a more general functional depending on a finite fragment of the system's trajectory. Such strengthened versions of correlation bounds appear as conditions in general limit theorems for dynamical systems, such as those established in \cite{Pene_2005,Gouezel_2010,HLS_2016}. \\
\indent In  \cite{Hella_2018}, an adaptation of Stein's method \cite{Stein_1972} was developed for normal approximation of time-dependent systems. For a bounded $d$-dimensional observable $f : X \to \bR^d$ the main result of \cite{Hella_2018} gave an upper bound on the distance between the distributions of 
\begin{align*}
W = W(N) = \frac{1}{\sqrt{N}}\sum_{n=0}^{N-1} (f \circ T_n \circ \cdots \circ T_1 - \mu (f \circ T_n \circ \cdots \circ T_1 ))
\end{align*}
and $\cN(0, \text{Cov}_\mu(W))$, under some correlation-decay conditions for the non-stationary process $(f \circ T_n \circ \cdots \circ T_1)_{n\ge1}$. The result was applied to estimate the rate of convergence in the CLT for time-dependent systems composed of smooth uniformly expanding circle maps $T_n : S^1 \to S^1$. Three types of situations were examined, depending on how the circle maps $T_n$ are picked:

\begin{itemize}
\item[(1)] No additional assumptions are made on how the maps are chosen. Then it was shown that the centered partial sums $S = \sum_{n=0}^{N-1} (f \circ T_n \circ \cdots \circ T_1 - \mu (f \circ T_n \circ \cdots \circ T_1 ))$ satisfy a self-norming CLT,
\begin{align}\label{eq:clt_intro}
\frac{S}{\sqrt{\text{Var}_\mu(S)}} \stackrel{D}{\to} \cN(0,1),
\end{align}
with an estimate on the rate of convergence, under the assumption that the growth of $\text{Var}_\mu(S)$ is sufficiently rapid. Here $f : [0,1] \to \bR$ and the density of $\mu$ are assumed to be sufficiently regular. \smallskip
\item[(2)]{The maps are chosen randomly according to a shift-invariant probability measure. In this case a quenched CLT for $W$ was obtained under a strong mixing condition for the selection process. The main result of \cite{HellaStenlund_2018} was used to identify conditions under which the variance $\text{Var}_\mu(W)$ converges almost surely to a positive non-random limit.} \smallskip
\item[(3)]{The dynamics is described by compositions of the form $T_{n,k} \circ \cdots \circ T_{n,1}$, where each $T_{n,k}$ belongs to a triangular array $\{ T_{n,k} \: : \: 0 \le k \le n, \, n \ge 1 \}$. The maps $T_{n,k}$ are assumed to vary slowly with $k$: they approximate a regular curve $\gamma : [0,1] \to \cM$ in the $C^1$-topology, where $\cM$ denotes the space of all admissible circle maps, in the sense that $\lim_{n\to\infty} T_{n,\round{nt}} = \gamma_t$ for all $t \in [0,1]$. The setup is an example of a so-called quasistatic dynamical system (QDS), which is a class of non-stationary systems introduced in \cite{dobbs2016}. QDSs model, among others, slowly transforming thermodynamic processes. The expanding QDS described above was shown to satisfy a multivariate CLT, again augmented by an estimate on the rate of convegence, under some conditions on the regularity of $\gamma$ and the rate of convegence in the above limit.}
\end{itemize}

The purpose of the present paper is to extend the three applications described above to the setting considered in \cite{leppanen2017}, i.e. when the time-dependent system is composed of non-uniformly expanding intermittent maps. In the subsequent sections we state results which give estimates on the convergence rate in the CLT for sequential, random, and quasistatic compositions of these maps. \\
\indent Statistical properties of time-dependent dynamical systems have been studied with increasing focus over the past decade; see for instance \cite{OttStenlundYoung_2009,Stenlund_2011,
StenlundYoungZhang_2013,Stenlund_2014,StenlundSulku_2014,GuptaOttTorok_2013,
MohapatraOtt_2014,Kawan_2015,Kawan_2014,Zhu_etal_2012,
ArnouxFisher_2005,KawanLatushkin_2015, Freitas_2016, rousseau2016}. Self-norming CLTs in the spirit of \eqref{eq:clt_intro} for non-random compositions were obtained in  \cite{bakhtin1994,bakhtin_1994_2} for a class of nearby hyperbolic maps, and in \cite{Nandori_2012, ConzeRaugi_2007, Lothar_1996} for one-dimensional piecewise-expanding maps, where  \cite{Lothar_1996} established also rates of convergence with respect to the Kolmogorov metric. The general operator-theoretic approach of \cite{ConzeRaugi_2007}, which coarsely speaking applies to compositions of maps with quasicompact transfer operators on a suitable Banach, was used in \cite{Haydn_2017} to show almost sure invariance principles also for higher dimensional time-dependent systems.\\
 \indent The study of time-dependent non-uniformly expanding maps was initiated in \cite{aimino2015}, where a statistical memory loss result was established for compositions of intermittent maps $T_{\alpha_n}$ with parameters $0 < \alpha_n \le \beta_* < 1$, but without any other assumptions on how the sequence $(\alpha_n)$ is chosen. The result was a key ingredient in the proof for the functional correlation bound of \cite{leppanen2017} -- one of the main tools of the present manuscript -- and it was also instrumental in \cite{nicol2016}, where the CLT was first examined in this setup. The main result of \cite{nicol2016} shows that, in a certain parameter range, a CLT of the form \eqref{eq:clt_intro} holds for all $C^1$-observables $f : [0,1] \to \bR$, when $\mu$ is the Lebesgue measure, assuming again that $\text{Var}_\mu(S)$ grows sufficiently fast. The last condition was shown to be satisfied by maps $T_{\alpha_n}$ not too far from a fixed map $T_{\alpha}$ for which $f$ is not a co-boundary (in this case the growth of $\text{Var}_\mu(S)$ is linear). Our results imply an upper bound on the rate of convergence in the self-norming CLT with respect to the Wasserstein metric. \\
\indent Quenched limit theorems for random dynamical systems have been established in many articles. We mention \cite{AyyerStenlund_2007,Aimino_2015,Abdelkader_2016,Jianyu_2018, Froyland_2018, Dragicevic_2018} as examples of recent works in this area. In \cite{nicol2016}, a quenched CLT for i.i.d selections of intermittent maps belonging to a finite set of maps was established. In \cite{bahsoun2016,ruziboev2018, bahsoun2017}, quenched correlation bounds with applications to limit theorems for slowly mixing random systems were shown.

\subsection{Notation.} Given a probability space \((X,\cB,\mu)\), and a function \(f: \, X \to \bR^d\), we denote \(\mu(f) = \int_{X} f \, d\mu\). The Lebesgue measure on the unit interval is denoted by \(m\). The coordinate functions of \(f\) are denoted by \(f_{\alpha}\), $\alpha \in\{1,\dots,d\}$, and we set
\beqn
\|f\|_\infty = \max_{1\le\alpha\le d}\|f_\alpha\|_\infty.
\eeqn

\indent We endow \(\bR^d\) with the max-norm \(\Vert x \Vert_{\infty} = \max_{\alpha =1,\ldots ,d} \vert x_{\alpha} \vert \), and for a Lipschitz continuous function \(f \, : \bR \to \bR^{d}\) define 
\begin{align*}
\text{Lip}(f) =  \max_{\alpha = 1,\ldots,d}\sup _{x \neq y} \frac{ | f_{\alpha}(x) - f_{\alpha}(y) | }{| x-y |},
\end{align*}
and \(\Vert f \Vert_{\text{Lip}} = \Vert f \Vert_{\infty} + \text{Lip}(f)\). 

For a function \(B: \, \bR^d \to \bR^{d'}\), we write \(D^kB\) for the $k$th derivative of \(B\), and also denote \(\nabla B = D^1 B\). We define
\begin{align*}
\Vert D^k B \Vert_{\infty} = \max \{ \|\partial_1^{t_1}\cdots\partial_d^{t_d}B_\alpha\|_{\infty}  :  t_1+\cdots+t_d = k,\, 1\le \alpha\le d'  \}.
\end{align*} 
Finally, given two vectors $v,w\in\bR^d$, we write $v\otimes w$ for the $d\times d$ matrix with entries
\beqn
(v\otimes w)_{\alpha\beta} = v_\alpha w_\beta.
\eeqn

Throughout, \(C\) stands for a positive constant whose value may vary from one occurrence to the next. We use \(C(a,b,\ldots)\) to denote positive constants that depend only on the parameters $a,b,\ldots$.

\subsection{Outline of the paper}
In Section \ref{Results} we give some definitions and also all the results of the paper. We also include some of the shorter proofs in Section \ref{Results}. In Section \ref{Stein} and Section \ref{more proofs} we give the rest of the proofs.
\section{Results}\label{Results}

Following \cite{liverani1999}, we define for each \(\alpha \in (0,1)\) the map \(T_{\alpha} : [0,1] \to [0,1]\) by
\begin{align}\label{def:pm}
T_{\alpha }(x) = \begin{cases} x(1+ 2^{\alpha }x^{\alpha}) & \forall x \in [0, 1/2),\\
2x-1 & \forall x \in [1/2,1].
 \end{cases} \hspace{0.5cm}
\end{align} 
Each map has a neutral fixed point at the origin, which together with expansion is responsible for intermittent behavior of the dynamics. Expansion around the origin weakens as \(\alpha\) grows. On the other hand, if \(\alpha \downarrow 0\), the neighborhood in which \(T_{\alpha}' \approx 1\) becomes ever smaller, and at \(\alpha = 0\) we arrive at the uniformly expanding angle-doubling map. \\
\indent Associated to each map \(T_{\alpha}\) is its transfer operator \(\cL_{\alpha} : L^1(m) \to L^1(m)\) defined by
\begin{align*}
\cL_{\alpha} h(x) = \sum_{y \in T_{\alpha}^{-1}\{x\}} \frac{h(y)}{T'_{\alpha}(y)}.
\end{align*}
\indent We denote by \(\hat{\nu}_{\alpha}\) the invariant absolutely continuous probability measure associated to ~\(T_{\alpha}\). It is well-known that the density \(\hat{h}_{\alpha}\) of \(\hat{\nu}_{\alpha}\) is Lipschitz continuous on intervals of the form \([\ve,1]\), where \(\ve > 0\) \cite{young1999,hu2004,liverani1999}. In fact, it follows from \cite{liverani1999} that \(\hat{h}_{\alpha}\) belongs to the convex cone of functions
\beqn
\begin{split}
\cC_*(\alpha) = \{f\in C((0,1])\cap L^1\,:\, & \text{$f\ge 0$, $f$ decreasing,} 
\\
& \text{$x^{\alpha+1}f$ increasing, $f(x)\le 2^{\alpha} (2 + \alpha) x^{-\alpha} m(f)$}\}.
\end{split}
\eeqn
We recall from \cite{liverani1999} that 
\begin{align*}
0 < \alpha \le \beta \hspace{0.2cm}  \Rightarrow \hspace{0.2cm} \cC_*(\alpha) \subset \cC_*(\beta),
\end{align*}
and that
\begin{align*}
0 < \alpha \le \beta\hspace{0.2cm}  \Rightarrow \hspace{0.2cm}  \cL_{\alpha} \cC_*(\beta) \subset  \cC_*(\beta).
\end{align*}
\indent For a single map \(T_{\alpha}\), we denote \(T^{n+1}_{\alpha} = T^n_{\alpha} \circ T_{\alpha}\) for all $n \ge 0$, where \(T^0 = \text{id}_{[0,1]}\). Below we state limit results for time-dependent compositions of the maps $T_{\alpha}$.

\subsection{Time-dependent intermittent maps.} We fix \(\beta_* \in (0,1)\), and call a sequence \((T_{n})_{n \ge 1}\)  of intermittent maps  admissible, if \(0 \le \alpha_n \le \beta_*\) for all \(n \ge 1\). Given such a sequence we denote  \(T_n = T_{\alpha_n}\), and \(\widetilde{T}_n = T_n \circ \cdots \circ T_1\). Let \(\mu\) be a Borel probability measure on \([0,1]\), and let \(f : [0,1] \to \bR^d\) be a function with \(d \ge 1\). For \(N \ge 1\), we denote 
\begin{align*}
W = W(N) = \frac{1}{\sqrt{N}} \sum_{i=0}^{N-1} (f \circ \widetilde{T}_i - \mu(f \circ \widetilde{T}_i)),
\end{align*}
The covariance matrix of \(W\) is denoted by \(\Sigma_N = \mu(W \otimes W)\), and in the case \(d=1\) we set ~\(\sigma_N^2 = \Sigma_N\). \\
\indent Our first theorem, which is proven in Section \ref{Stein}, gives an upper bound on the distance between \(W\) and the $d$-dimensional centered normal distribution $\cN(0,\Sigma_N)$ with covariance matrix~$\Sigma_N$. 

\begin{thm}\label{thm:int_multi} Let \((T_{n})_{n \ge 1}\) be an admissible sequence of intermittent maps. Suppose that the density of the initial measure \(\mu\) belongs to \(\cC_*(\beta_*)\), where \(\beta_* < 1/3\). Let \(N \ge 2\) and let \(f: \, [0,1] \to \bR^d\) be a Lipschitz continuous function such that \(\Sigma_N\) is positive definite. Then, for any three times differentiable function \(h: \, \bR^d \to \bR\) with \(\max_{k=1,2,3}\Vert D^k h \Vert_{\infty} < \infty\),
\begin{align*}
&|\mu(h(W)) - \Phi_{\Sigma_N}(h)| \le C N^{\beta_* - \frac12}(\log N)^{\frac{1}{\beta_*}},
\end{align*}
where \(C > 0\) is a constant independent of \(N\). Here \(\Phi_{\Sigma_N}(h)\) denotes the expectation of \(h\) with respect to $\cN(0,\Sigma_N)$.
\end{thm}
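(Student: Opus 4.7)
The plan is to apply the abstract multivariate Stein-type normal approximation theorem of \cite{Hella_2018} to the process $X_i = f\circ\widetilde{T}_i - \mu(f\circ\widetilde{T}_i)$, $0\le i\le N-1$, and then to verify its hypotheses using the functional correlation bound of \cite{leppanen2017}. Schematically, the abstract bound takes the form
\begin{align*}
|\mu(h(W))-\Phi_{\Sigma_N}(h)| \le C(h,f)\Bigl(A_1(K,N) + A_2(K,N) + A_3(K,N)\Bigr),
\end{align*}
where $K=K(N)$ is a free block-length parameter, $A_1 \sim K/\sqrt{N}$ absorbs the diagonal contribution from triples $(X_i,X_j,X_k)$ whose indices lie within distance $K$ of each other (estimated using only $\|f\|_\infty$), and $A_2,A_3$ are off-diagonal functional correlation quantities over triples with at least two indices separated by a gap $\ge K$.

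The key input for the off-diagonal terms is the functional correlation bound of \cite{leppanen2017}. Because $\cC_*(\beta_*)$ is invariant under each $\cL_{\alpha_n}$ with $\alpha_n\le\beta_*$, the push-forward densities associated to $\widetilde{T}_n$ remain inside $\cC_*(\beta_*)$ with uniformly controlled constants along the composition. Applying the bound of \cite{leppanen2017} to triples whose indices have minimal gap $K$, with Lipschitz $f$, yields an estimate of order $K^{1-1/\beta_*}(\log K)^{1/\beta_*}$ per correlation. After summing over the $O(N^2)$ relevant index triples and accounting for the $N^{-3/2}$ normalization coming from $W=N^{-1/2}\sum X_i$, this contributes a term of order $\sqrt{N}\cdot K^{1-1/\beta_*}(\log K)^{1/\beta_*}$. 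The restriction $\beta_*<1/3$ enters precisely here, as it ensures $1-1/\beta_*<-2$, so that the off-diagonal sums converge and are dominated by their minimal-gap contribution.

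Balancing $K/\sqrt N$ against $\sqrt N\cdot K^{1-1/\beta_*}(\log K)^{1/\beta_*}$ and ignoring logarithms one obtains $K^{1/\beta_*}\asymp N$, i.e.\ $K\asymp N^{\beta_*}$, which yields a rate of $N^{\beta_*-1/2}$; tracking the logarithmic factors produces the stated $(\log N)^{1/\beta_*}$.

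The main obstacle will be translating the abstract Stein bound into quantities to which \cite{leppanen2017} directly applies. The $A_j$ are built not only from triple correlations $\mu(X_i X_j X_k)$ but also from mixed product correlations such as $\mu(X_iX_j)\mu(X_k)$ and block-wise conditional expectations that appear after the Lindeberg swap step in Stein's method, and one must verify that the relevant functionals of the trajectory $(\widetilde{T}_i)$ remain Lipschitz with well-behaved norms when restricted to gap-separated index sets. A related bookkeeping task is propagating the $x^{-\alpha}$-type growth of densities in $\cC_*(\beta_*)$ through successive applications of $\cL_{\alpha_n}$: this is the source of the $(\log N)^{1/\beta_*}$ factor and must be tracked explicitly to obtain the announced rate.
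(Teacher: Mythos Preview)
Your high-level strategy is correct and matches the paper: apply the abstract Stein-type theorem of \cite{Hella_2018}, verify its hypotheses via the functional correlation bound of \cite{leppanen2017}, and balance the block parameter at $K\asymp N^{\beta_*}$ to obtain the rate $N^{\beta_*-1/2}(\log N)^{1/\beta_*}$. The role of $\beta_*<1/3$ (to make $\sum i\rho(i)<\infty$) is also identified correctly.

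However, your description of the structure of the abstract bound is off in a way that would lead you to verify the wrong conditions. The theorem in \cite{Hella_2018} does \emph{not} produce triple correlations $\mu(X_iX_jX_k)$ or Lindeberg-swap conditional expectations. Its hypotheses are: (A1) decay bounds on \emph{two-point} correlations $\mu(\bar f^i_\alpha\bar f^j_\beta)$ and \emph{four-point} correlations $\mu(\bar f^i_\alpha\bar f^j_\beta\bar f^k_\gamma\bar f^l_\delta)$ (both raw and after subtracting $\mu(\bar f^i_\alpha\bar f^j_\beta)\mu(\bar f^k_\gamma\bar f^l_\delta)$); and (A2) a bound on the single functional correlation $|\mu(\bar f^n\cdot\nabla h(v+W_n t))|$, where $W_n$ omits the indices within $K$ of $n$. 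The resulting inequality is
\[
|\mu(h(W))-\Phi_{\Sigma_N}(h)|\le C\Bigl(\tfrac{K+1}{\sqrt N}+\sum_{i>K}\rho(i)\Bigr)+\sqrt N\,\tilde\rho(K),
\]
so there is no separate counting over $O(N^2)$ triples with $N^{-3/2}$ normalization. Both (A1) and (A2) are checked by a direct application of the functional correlation theorem of \cite{leppanen2017} with $\rho(n)=n^{1-1/\beta_*}(\log n)^{1/\beta_*}$; for (A2) one observes that $F(x_0,\dots,x_{N-1})=\bar f(x_n)\cdot\nabla h(v+N^{-1/2}\sum_{i\notin[n]_K}\bar f(x_i)t)$ has Lipschitz seminorms controlled by $\|f\|_{\mathrm{Lip}}$, $\|\nabla h\|_\infty$ and $\|D^2h\|_\infty$.

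Two further points. First, the $(\log N)^{1/\beta_*}$ factor is not something you need to ``track explicitly'' through the cone: it is already present in the rate function $\rho$ supplied by \cite{aimino2015,leppanen2017}, and simply survives the final substitution $K=\lfloor N^{\beta_*}\rfloor$. Second, the cone invariance $\cL_\alpha\cC_*(\beta_*)\subset\cC_*(\beta_*)$ is used only to ensure the hypotheses of the cited correlation theorem are met; no separate density-growth bookkeeping is required in this proof.
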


Although Theorem \ref{thm:int_multi} applies in the univariate case \(d=1\), we also have the following complementary result where the smoothness of the test function \(h\) is relaxed to Lipschitz continuity. The expense is that the obtained upper bound depends on the variance \(\sigma_N^2\). Before stating the result, we recall that the Wasserstein distance between two random variables $Y_1$ and $Y_2$  is defined as
\beqn
d_\mathscr{W}(Y_1,Y_2) = \sup_{h\in\mathscr{W}}|\mu(h(Y_1)) - \mu(h(Y_2))|,
\eeqn
where
\beqn
\mathscr{W} = \{h:\bR\to\bR\,:\,|h(x) - h(y)| \le |x-y|\}
\eeqn
is the class of all $1$-Lipschitz functions.

\begin{thm}\label{thm:int} Let \((T_{\alpha_n})_{n \ge 1}\) be an admissible sequence of maps. Let \(Z \sim \cN(0,1)\) be a random variable with normal distribution of mean \(0\) and variance \(1\). Suppose that the density of the initial measure \(\mu\) belongs to \(\cC_*(\beta_*)\), where \(\beta_* < 1/3\). Moreover, let \(N \ge 2\) and let \(f: \, [0,1] \to \bR\) be a Lipschitz continuous function such that \(\sigma_N > 0\). Then,
\begin{align*}
d_{\mathscr{W}}(W,\sigma_{N}Z)
\le C \max\{1, \sigma_{N}^{-2}\} N^{\beta_* - \frac12}(\log N)^{\frac{1}{\beta_*}},
\end{align*}
where \(C > 0\) is a constant independent of \(N\).
\end{thm}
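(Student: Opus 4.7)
Since $h \in \mathscr{W}$ is only Lipschitz and not three-times differentiable, Theorem~\ref{thm:int_multi} cannot be applied directly. My plan is to invoke Stein's method with target $\cN(0,\sigma_N^2)$: let $g = g_h$ solve the Stein equation
\begin{equation*}
\sigma_N^2 \, g'(x) - x\, g(x) = h(x) - \Phi_{\sigma_N^2}(h), \qquad x \in \bR.
\end{equation*}
Then $g$ is automatically smooth, and standard estimates for the Stein equation give $\|g\|_\infty \le C\sigma_N$, $\|g'\|_\infty \le C$, and $\|g''\|_\infty \le C/\sigma_N$, with $C$ depending only on $\|h'\|_\infty \le 1$.

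Taking $\mu$-expectations converts the test-function difference into
\begin{equation*}
\mu(h(W)) - \Phi_{\sigma_N^2}(h) = \mu\bigl[\sigma_N^2\, g'(W) - W\, g(W)\bigr],
\end{equation*}
and the right-hand side is exactly of the form that the Stein-method machinery of \cite{Hella_2018} underlying the proof of Theorem~\ref{thm:int_multi} is designed to estimate. Concretely, I would use $W = N^{-1/2}\sum_{i=0}^{N-1}(f \circ \widetilde T_i - \mu(f \circ \widetilde T_i))$ to expand $W g(W)$, Taylor-expand $g$ around the ``leave-$i$-out'' partial sums, and bound the resulting integrals via the functional correlation bound of \cite{leppanen2017} together with the memory-loss estimates of \cite{aimino2015,leppanen2017}. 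This yields a schematic bound of the form $C\bigl(\|g'\|_\infty + \sigma_N^{-1}\|g''\|_\infty\bigr)\, N^{\beta_*-1/2}(\log N)^{1/\beta_*}$, in which the extra $\sigma_N^{-1}$ on the $\|g''\|_\infty$ term arises from normalizing the Stein identity by the target variance $\sigma_N^2$. Substituting $\|g''\|_\infty \le C/\sigma_N$ then produces the prefactor $\max\{1,\sigma_N^{-2}\}$.

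The main obstacle is that Theorem~\ref{thm:int_multi} uses \emph{three} bounded derivatives of the test function, whereas $g'''$ cannot be controlled from $h \in \mathscr{W}$ alone. To sidestep this I would first mollify $h \mapsto h_\delta$ by convolution with a Gaussian of small scale $\delta > 0$, apply the argument above to $g_{h_\delta}$ (whose higher derivatives are bounded, but with $\delta$-dependent constants), and then pass to the limit $\delta \downarrow 0$. The essential point is that the Stein bounds on $g_{h_\delta}, g_{h_\delta}', g_{h_\delta}''$ are \emph{uniform} in $\delta$, depending only on $\|h_\delta'\|_\infty \le 1$. The most delicate step will be showing that the $\delta$-dependent contribution from $g_{h_\delta}'''$ in the Stein expansion vanishes in the limit without injecting any additional $\sigma_N^{-1}$, since a single extra such factor would spoil the claimed $\sigma_N^{-2}$ prefactor.
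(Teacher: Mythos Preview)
Your plan is headed in the right direction---Stein's method with target $\cN(0,\sigma_N^2)$ is indeed the mechanism behind the result---but you are re-deriving from scratch machinery that the paper simply imports as a black box. The paper's proof is a two-line application of the abstract univariate Stein result from \cite{Hella_2018}, recalled here as Theorem~\ref{main thm 1d}: one verifies conditions (B1) and (B2) by the same application of the functional correlation bound (Theorem~\ref{thm:corr}) used for (A1)--(A2) in the proof of Theorem~\ref{thm:int_multi}, takes $\rho(n)=n^{1-1/\beta_*}(\log n)^{1/\beta_*}$ and $\tilde\rho=\rho$, chooses $K=\lfloor N^{\beta_*}\rfloor$, and reads off the bound. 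The $\max\{1,\sigma_N^{-2}\}$ prefactor is already built into the conclusion of Theorem~\ref{main thm 1d}.

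Your mollification detour is unnecessary, and this is the one place where your plan contains a genuine (if self-inflicted) difficulty. The key structural fact you are missing is that for a $1$-Lipschitz $h$ the Stein solution $g$ has $g'$ absolutely continuous with $g''$ bounded almost everywhere; in particular only two derivatives of $g$ are ever needed in the dependency-neighbourhood expansion, and this is exactly why condition (B2) in Theorem~\ref{main thm 1d} is phrased for test functions $A$ with $A'$ absolutely continuous and $\max_{0\le k\le 2}\|A^{(k)}\|_\infty\le 1$, rather than the three-derivative condition (A2) of Theorem~\ref{thm:main}. Thus the ``delicate step'' of controlling a $\delta$-dependent $g_{h_\delta}'''$ never arises: no third derivative enters the argument if the Stein expansion is organised correctly. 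If you insist on mollifying and then passing through the three-derivative machinery of Theorem~\ref{thm:int_multi}, you will pick up an extra $\|h_\delta'''\|_\infty\sim\delta^{-2}$ that, after optimising in $\delta$, degrades the rate to roughly $N^{(\beta_*-1/2)/3}$---so that route genuinely fails to recover the stated bound. (A minor side remark: your quoted Stein bounds $\|g\|_\infty\le C\sigma_N$, $\|g'\|_\infty\le C$, $\|g''\|_\infty\le C/\sigma_N$ are not quite right for the equation $\sigma_N^2 g'-xg=h-\Phi_{\sigma_N^2}(h)$; after rescaling from the unit-variance case one gets $\|g'\|_\infty\le C\sigma_N^{-1}$ and $\|g''\|_\infty\le C\sigma_N^{-2}$, which is what actually produces the $\max\{1,\sigma_N^{-2}\}$ factor.)
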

The proof of Theorem \ref{thm:int} is similar to the proof of Theorem \ref{thm:int_multi}. The sketch of the proof of Theorem \ref{thm:int} is given in Section \ref{Stein}. Let \(S = \sqrt{N} W = \sum_{i=0}^{N-1} (f \circ \widetilde{T}_i - \mu(f \circ \widetilde{T}_i))\), and \(\text{Var}(S) = \mu(S^2)\). As an immediate corollary of the above theorem we obtain the following estimate on the rate of convergence in the self-norming univariate CLT: 

\begin{cor}\label{thm:clt_self} Let \(Z\), \(\mu\), \(\beta_*\) and \(f\) be as in Theorem \ref{thm:int}. Assume that $\text{Var}(S) \ge CN^{\ve}$, where $C>0$ and $0\le\ve\le 1$. Then, for any \(N \ge 2\),
\beqn
d_{\mathscr{W}}\left(\frac{S}{\sqrt{\text{Var} (S)}},Z \right)\le  C  N^{1 - \frac32 \ve + \beta_*}(\log N)^{\frac{1}{\beta_*}}.
\eeqn
In particular, if \(\ve >  \tfrac23 (1 + \beta_*)\), then
\begin{align}\label{eq:clt}
\frac{S}{\sqrt{\text{Var}(S)}} \stackrel{D}{\to} \cN (0,1).
\end{align}
\end{cor}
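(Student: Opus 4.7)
The corollary is essentially a direct consequence of Theorem~\ref{thm:int}, so the plan is to reduce everything to that bound by rescaling and then to substitute in the variance assumption.

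First I would make the identification $S/\sqrt{\text{Var}(S)} = W/\sigma_N$, which follows from $\text{Var}(S) = \mu(S^2) = N\sigma_N^2$ (since $W = S/\sqrt{N}$ and $\mu(W)=0$). So the target quantity $d_{\mathscr{W}}(S/\sqrt{\text{Var}(S)}, Z)$ equals $d_{\mathscr{W}}(W/\sigma_N, \sigma_N Z/\sigma_N)$, which is the Wasserstein distance between the two random variables from Theorem~\ref{thm:int} divided by the common factor $\sigma_N$.

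Next I would record the elementary rescaling estimate: for any $c>0$ and real-valued random variables $X,Y$,
\[
d_{\mathscr{W}}(X/c,Y/c) \le c^{-1}\, d_{\mathscr{W}}(X,Y).
\]
This is because if $h$ is $1$-Lipschitz then $g(x)=c\,h(x/c)$ is again $1$-Lipschitz and $\mu(h(X/c)) - \mu(h(Y/c)) = c^{-1}(\mu(g(X)) - \mu(g(Y)))$. Applying this with $c=\sigma_N$ together with Theorem~\ref{thm:int} gives
\[
d_{\mathscr{W}}\!\left(\tfrac{S}{\sqrt{\text{Var}(S)}},Z\right) \le \sigma_N^{-1}\, d_{\mathscr{W}}(W,\sigma_N Z) \le C\,\sigma_N^{-1}\max\{1,\sigma_N^{-2}\}\, N^{\beta_* - \tfrac12}(\log N)^{\tfrac{1}{\beta_*}}.
\]

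To finish, I would translate the variance hypothesis into a bound on $\sigma_N$: $\text{Var}(S) \ge CN^\ve$ and $\text{Var}(S) = N\sigma_N^2$ yield $\sigma_N^2 \ge C N^{\ve-1}$, hence $\sigma_N^{-1}\max\{1,\sigma_N^{-2}\} \le \max\{1,\sigma_N^{-3}\} \le C N^{3(1-\ve)/2}$ (using $\ve \le 1$ for the constant term). Multiplying by the $N^{\beta_*-1/2}$ factor gives the exponent $3(1-\ve)/2 + \beta_* - 1/2 = 1 - \tfrac{3}{2}\ve + \beta_*$, which is the claimed rate. For the convergence statement \eqref{eq:clt}, I note that the exponent $1 - \tfrac32\ve + \beta_*$ is strictly negative precisely when $\ve > \tfrac23(1+\beta_*)$; since the logarithmic factor is subpolynomial, the Wasserstein distance to $\cN(0,1)$ then tends to zero, which implies convergence in distribution.

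There is essentially no obstacle here beyond careful bookkeeping of the $\sigma_N$ factors; the only mildly delicate point is the need to retain the $\max\{1,\sigma_N^{-2}\}$ from Theorem~\ref{thm:int} (rather than replacing it by $\sigma_N^{-2}$), which is what produces the correct exponent $3/2$ rather than $5/2$ in the regime $\sigma_N \ge 1$ and keeps the statement meaningful when the variance grows.
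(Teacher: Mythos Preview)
Your proof is correct and follows essentially the same route as the paper's: identify $S/\sqrt{\Var(S)}=W/\sigma_N$, use the scaling identity $d_{\mathscr{W}}(W/\sigma_N,Z)=\sigma_N^{-1}d_{\mathscr{W}}(W,\sigma_N Z)$, apply Theorem~\ref{thm:int}, and then convert the hypothesis $\Var(S)\ge CN^{\ve}$ into $\sigma_N^{-2}\le CN^{1-\ve}$ to obtain the exponent $1-\tfrac32\ve+\beta_*$. The only cosmetic difference is that the paper states the scaling step as an equality and bounds $\sigma_N^{-1}$ and $\max\{1,\sigma_N^{-2}\}$ separately rather than via your intermediate inequality $\sigma_N^{-1}\max\{1,\sigma_N^{-2}\}\le\max\{1,\sigma_N^{-3}\}$.
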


\begin{remark} In the smaller parameter range \(\beta_* < 1/9\), it is seen from Theorem 3.1 of \cite{nicol2016} that \eqref{eq:clt} holds for a weaker lower bound on the variance, namely $\text{Var}(S) \ge CN^{\ve}$  for some \(\ve > 1/2(1-2\beta_*)\) suffices. 
 \end{remark}

\begin{proof}[Proof for Corollary \ref{thm:clt_self}] 
We have $\text{Var}(S)=N\sigma_{N}^{2}$. Therefore 
\beqn
\text{Var}(S)\ge CN^{\epsilon} \Longleftrightarrow \sigma_{N}^{2}\ge CN^{\epsilon-1} \Longleftrightarrow \sigma_{N}^{-2}\le CN^{1-\epsilon}.
\eeqn
By the definition and properties of Wasserstein distance we have
\beqn
d_{\mathscr{W}}\left(\frac{S}{\sqrt{\text{Var}(S)}},Z\right) = d_{\mathscr{W}}\left(\frac{W}{\sigma_{N}},Z\right)
= \sigma_{N}^{-1}d_{\mathscr{W}}\left(W,\sigma_{N}Z\right).
\eeqn
Thus, by Theorem \ref{thm:int},
\beqn
\begin{split}
d_{\mathscr{W}}\left(\frac{S}{\sqrt{\text{Var}(S)}},Z\right)\le  CN^{\frac{1-\ve}{2}} \max\{1, N^{1-\ve}\} N^{\beta_* - \frac12}(\log N)^{\frac{1}{\beta_*}}=C  N^{1 - \frac32 \ve + \beta_*}(\log N)^{\frac{1}{\beta_*}}.
\end{split}
\eeqn
When $\ve > \frac{2}{3} (1 + \beta_*)$, it follows that $C  N^{1 - \frac32 \ve + \beta_*}(\log N)^{\frac{1}{\beta_*}}= o(1)$, which confirms \eqref{eq:clt}.
\end{proof}

We point out that if nothing is assumed on the behavior of \(\sigma_N^2\), our results still imply the following upper bound:

\begin{prop} Let \(Z\), \(\mu\), \(\beta_*\) and \(f\) be as in Theorem \ref{thm:int}. For any \(N \ge 2\), 
\begin{align*}
d_{\mathscr{W}}(W,\sigma_{N}Z)\le C N^{\frac{2\beta_{*}-1}{6}}(\log N)^{\frac{1}{\beta_*}}.
\end{align*}
\end{prop}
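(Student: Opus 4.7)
The plan is to interpolate between the bound from Theorem \ref{thm:int}, which is sharp when $\sigma_N$ is not too small, and an elementary Wasserstein estimate that is effective precisely in the small-$\sigma_N$ regime. Since $W$ has mean zero and variance $\sigma_N^2$ (the univariate case, $d=1$), for any $1$-Lipschitz $h:\bR\to\bR$ Jensen's inequality gives
\[
|\mu(h(W))-\mu(h(\sigma_N Z))|\le \mu(|W|)+\sigma_N\mu(|Z|)\le \sigma_N\bigl(1+\sqrt{2/\pi}\bigr),
\]
so that $d_{\mathscr{W}}(W,\sigma_N Z)\le C\sigma_N$ for some constant $C$ independent of $N$ and $\sigma_N$.

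Writing $B_N := N^{\beta_*-1/2}(\log N)^{1/\beta_*}$, Theorem \ref{thm:int} provides the complementary estimate $d_{\mathscr{W}}(W,\sigma_N Z)\le C\max\{1,\sigma_N^{-2}\}B_N$. These two bounds cross at $\sigma_N \asymp B_N^{1/3}$, so I would split into two cases. If $\sigma_N\ge B_N^{1/3}$, I invoke Theorem \ref{thm:int} and use $\sigma_N^{-2}\le B_N^{-2/3}$, noting that $B_N\le 1$ for $N$ sufficiently large (since $N^{\beta_*-1/2}$ decays faster than any polylogarithm grows), to obtain $d_{\mathscr{W}}(W,\sigma_N Z)\le CB_N^{1/3}$. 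If instead $\sigma_N<B_N^{1/3}$, the trivial estimate above immediately gives $d_{\mathscr{W}}(W,\sigma_N Z)\le CB_N^{1/3}$ as well; the case $\sigma_N=0$, where Theorem \ref{thm:int} is not applicable, is harmlessly subsumed into this branch.

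Putting the two cases together,
\[
d_{\mathscr{W}}(W,\sigma_N Z)\le CB_N^{1/3} = CN^{(2\beta_*-1)/6}(\log N)^{1/(3\beta_*)},
\]
and since $1/(3\beta_*)\le 1/\beta_*$, the logarithmic exponent can be enlarged to $1/\beta_*$ to match the claimed bound; the finitely many small values of $N$ where auxiliary constants misbehave are absorbed into $C$. There is no serious obstacle in this argument: the proposition is essentially the envelope of the two available estimates, and the only real content is the choice of threshold, which is forced by the structure of Theorem \ref{thm:int}.
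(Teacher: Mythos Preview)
Your argument is correct and follows essentially the same approach as the paper: split into two regimes according to the size of $\sigma_N$, apply Theorem \ref{thm:int} when $\sigma_N$ is large, and use the elementary bound $d_{\mathscr{W}}(W,\sigma_N Z)\le C\sigma_N$ when $\sigma_N$ is small. The only cosmetic difference is your threshold $\sigma_N=B_N^{1/3}$ carries the logarithm while the paper cuts at $\sigma_N=N^{(2\beta_*-1)/6}$; your choice actually yields the slightly sharper log-exponent $1/(3\beta_*)$, which you then discard.
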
 

\begin{proof} 
Suppose that $N\ge 2$ is an integer such that $\sigma_{N}>N^{\frac{2\beta_{*}-1}{6}}$. Then~$\max\{1, \sigma_{N}^{-2}\} <N^{\frac{1-2\beta_{*}}{3}}$. Therefore, Theorem \ref{thm:int} implies the bound
\beqn
\begin{split}
d_{\mathscr{W}}(W,\sigma_{N}Z) \le C N^{\frac{1-2\beta_{*}}{3}+\beta_* - \frac12}(\log N)^{\frac{1}{\beta_*}}= C N^{\frac{2\beta_{*}-1}{6}}(\log N)^{\frac{1}{\beta_*}} 
\end{split}
\eeqn
On the other hand, for all random variables $X,Y$ with bounded variances $\sigma_{X}^{2}, \sigma_{Y}^{2}$, respectively, it holds that $d_{\mathscr{W}}(X,Y)\le \sigma_{X}+ \sigma_{Y}$. See e.g. \cite{Hella_2018}. Therefore, for those ~$N\ge 2$ such that $\sigma_{N}\le N^{\frac{2\beta_{*}-1}{6}}$ it holds that $
d_{\mathscr{W}}(W,\sigma_{N}Z) \le  2N^{\frac{2\beta_{*}-1}{6}}$. Thus the claim holds for all integers \(N \ge 2\).
\end{proof}

\subsection{Random dynamical system}\label{subsec:random}
In this subsection we study a setup, where a sequence of Pomeau-Manneville maps is chosen randomly. We show that under some assumptions there exists a limit variance for $W$ and it is the same for almost every random sequence of transformations. \\
\indent Let $(T_{\omega_{i}})_{i=1}^{\infty}$ be a sequence of intermittent maps where $(\omega_i)_{i \ge 1}$ is drawn randomly from the probability space~$(\Omega,\cF,\bP) = ([0,\beta_*]^{\bZ_+},\cE^{\bZ_+},\bP)$. Here $\cE$ is the Borel algebra of $[0,\beta_*]$ and $\bZ_+ = \{1,2,\dots\}$.
 We assume the following about the random dynamical system in question:

\medskip

\noindent{\bf Assumption (RDS)}
\\
i) Each $\omega_{i}\in [0,\beta_*]$.
\\
ii) The law $\bP$ is stationary, i.e., the shift $\tau:\Omega\to\Omega:(\tau(\omega))_i = \omega_{i+1}$ preserves $\bP$.
\\
iii)  The random selection process is strong mixing, satisfying
\beqn
\sup_{i\ge 1} \sup_{A\in \cF_1^i,\, B\in \cF_{i+n}^\infty}|\bP(A \cap B) - \bP(A)\,\bP(B)| \le Cn^{-\gamma}
\eeqn
 for each $n\ge 1$, where $\gamma>0$ and $\cF_1^i$ is a sigma-algebra generated by the projections $\pi_1,...,\pi_i$, $\pi_k(\omega)=\omega_k$, and $\cF_{i+n}^\infty$ is generated by $\pi_{i+n},\pi_{i+n+1}\dots$.

\medskip

 Define $\sigma_{N}^{2}(\omega)=\sigma_{N}^{2}=
\Var_\mu  W(N)$ and $\sigma^{2}=\lim_{N\to\infty}\bE\sigma_{N}^{2}$, when the limit exists. Here $W$ is defined as in the previous subsection except that it now also has $\omega$-dependence. The next theorem gives a quenched convergence result for $W$ that holds for almost every sequence of transformations. 
 \begin{thm} Assume that (RDS) is satisfied with $\beta_* < 1/3$. Then 
 \beqn
 \sigma^{2}= \sum_{k=0}^\infty (2-\delta_{k0})\lim_{i\to\infty}  \bE [\mu(f_i f_{i+k}) - \mu(f_i)\mu(f_{i+k})]
 \eeqn
  is well-defined and non-negative. We have $\sigma>0$ if and only if 
 \beqn
 \sup_{N\ge 1} N\,\bE \mu(W^{2})=\infty.
\eeqn  
  Furthermore if $\sigma> 0$ holds, then for arbitrary $\delta>0$ and almost every $\omega$
 \beqn
d_\mathscr{W}(W(N),\sigma Z) = 
\begin{cases}
O(N^{\beta_* - \frac12}(\log N)^{\frac{1}{\beta_*}}), & \gamma \ge 1,
  \\
  O(N^{\beta_* - \frac12}(\log N)^{\frac{1}{\beta_*}})+O(N^{-\frac{\gamma}{2}} (\log N)^{\frac32+\delta}), &  0<\gamma<1.
\end{cases}
 \eeqn 
 \end{thm}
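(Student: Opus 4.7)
My strategy is to combine the pathwise Wasserstein bound of Theorem~\ref{thm:int} with a quantitative almost sure convergence of the variances $\sigma_N^2(\omega) \to \sigma^2$. For each $\omega$ in a full measure set on which $\sigma_N(\omega) \to \sigma > 0$, Theorem~\ref{thm:int} applied to the deterministic admissible sequence $(T_{\omega_i})_{i \ge 1}$ gives, for all $N$ large enough,
\beqn
d_\mathscr{W}(W(N), \sigma_N Z) \le C(\omega)\, N^{\beta_* - \frac12}(\log N)^{\frac{1}{\beta_*}},
\eeqn
since the prefactor $\max\{1, \sigma_N^{-2}\}$ becomes eventually bounded. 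The triangle inequality together with $d_\mathscr{W}(\sigma_N Z,\sigma Z) \le |\sigma_N - \sigma|\,\bE|Z|$ reduces the problem to controlling $|\sigma_N - \sigma|$, which I split as $|\sigma_N^2 - \sigma^2| \le |\sigma_N^2 - \bE\sigma_N^2| + |\bE\sigma_N^2 - \sigma^2|$ into a deterministic and a stochastic estimate.

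For well-definedness of $\sigma^2$ and the deterministic part $\bE\sigma_N^2 \to \sigma^2$, I would use the functional correlation bound from~\cite{leppanen2017}: since the initial density is in $\cC_*(\beta_*)$ and the cone is preserved by every $\cL_{\omega_j}$ with $\omega_j \le \beta_*$, one obtains $|\mu(f_i f_{i+k}) - \mu(f_i)\mu(f_{i+k})| \le C k^{1-1/\beta_*}$ uniformly in $\omega$, an exponent strictly less than $-2$ when $\beta_* < 1/3$. Combined with the memory-loss estimate of~\cite{aimino2015} and the stationarity of $\bP$ under the shift $\tau$, this shows that $c_{ij}(\omega) := \mu(f_i f_j) - \mu(f_i)\mu(f_j)$ satisfies $\lim_{i\to\infty} \bE c_{i,i+k} = c_k$, and unfolding gives
\beqn
\bE\sigma_N^2 = \frac{1}{N}\sum_{i,j=0}^{N-1}\bE c_{ij} = c_0 + 2\sum_{k=1}^{N-1}\left(1-\tfrac{k}{N}\right)c_k + O(N^{-1}),
\eeqn
yielding $\bE\sigma_N^2 \to \sigma^2 = c_0 + 2\sum_{k\ge 1}c_k$ at rate $O(N^{-1})$, using summability of $(k|c_k|)_{k\ge 1}$ (which again relies on $\beta_* < 1/3$). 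The equivalence $\sigma > 0 \Longleftrightarrow \sup_N N\bE\mu(W^2) = \infty$ then follows from $N\bE\mu(W^2) = N\bE\sigma_N^2 = N\sigma^2 + O(1)$.

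The main obstacle is the stochastic concentration $\sigma_N^2(\omega) - \bE\sigma_N^2 \to 0$ with quantitative rate. I would estimate $\Var_\bP(\sigma_N^2)$ by expanding $\sigma_N^2 = N^{-1}\sum_{i,j<N}c_{ij}$ into $\omega$-measurable summands --- each $c_{ij}$ depends on $\omega$ only through $\omega_1,\dots,\omega_{\max(i,j)}$ --- and controlling the $\bP$-covariances between pairs $(i,j)$ and $(i',j')$ by splitting according to whether $\min(i',j') - \max(i,j)$ is large: Assumption~(RDS)(iii) supplies an $\alpha$-mixing bound decaying as $n^{-\gamma}$ for well-separated pairs, while the remaining diagonal-type pairs are dominated using the deterministic decay $k^{1-1/\beta_*}$. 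This produces a polynomial upper bound on $\Var_\bP(\sigma_N^2)$, and a Borel--Cantelli argument along a geometric subsequence, interpolated to intermediate $N$, yields
\beqn
|\sigma_N^2(\omega) - \bE\sigma_N^2| = O\bigl(N^{-\gamma/2}(\log N)^{\frac32+\delta}\bigr) \quad \text{a.s.}
\eeqn
for $0 < \gamma < 1$, with a strictly faster rate when $\gamma \ge 1$ that is absorbed by the $N^{\beta_* - 1/2}(\log N)^{1/\beta_*}$ term from Theorem~\ref{thm:int}. Combining this with $|\sigma_N - \sigma| \le \sigma^{-1}|\sigma_N^2 - \sigma^2|$ and the triangle inequality delivers the stated dichotomy. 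The hardest step is the quantitative bound on $\Var_\bP(\sigma_N^2)$, where the interaction between $\alpha$-mixing in $\omega$ and the polynomial correlation decay in $(i,j)$ has to be carefully optimized, together with the book-keeping of polylog factors in the interpolation.
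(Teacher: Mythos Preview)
Your proposal is correct and follows the same overall route as the paper: apply Theorem~\ref{thm:int} pathwise, then control $|\sigma_N(\omega)-\sigma|$ by splitting $|\sigma_N^2-\bE\sigma_N^2|+|\bE\sigma_N^2-\sigma^2|$ and combining via the triangle inequality for $d_\mathscr{W}$. The paper outsources the entire variance analysis to \cite{HellaStenlund_2018}: it verifies the abstract conditions (SA1)--(SA4) there from Theorem~\ref{thm:corr}, the memory-loss bound of \cite{aimino2015}, and (RDS), and then invokes Theorem~4.1 of \cite{HellaStenlund_2018} to obtain the rates on $|\sigma_N^2(\omega)-\sigma^2|$; your sketch is effectively a direct reconstruction of what that reference provides.

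The one substantive difference is the equivalence $\sigma>0 \Leftrightarrow \sup_N N\,\bE\mu(W^2)=\infty$. The paper obtains this from Lemma~B.1 of \cite{HellaStenlund_2018}, which requires an additional uniform $L^2(\mu)$-bound on the Radon--Nikodym derivative $d(\widetilde T_n)_*\mu/d\mu$ (their condition (SA5')), checked explicitly via cone estimates and $\beta_*<1/2$. Your route through $N\,\bE\sigma_N^2 = N\sigma^2 + O(1)$ avoids this density bound altogether, and is valid once you verify that $\sum_{i\ge 0}|\bE c_{i,i+k}-c_k|<\infty$ uniformly in $k$ --- which does follow by comparing $c_{i+1,i+1+k}(\omega)$ with $c_{i,i+k}(\tau\omega)$ using stationarity of $\bP$ and the memory-loss estimate of \cite{aimino2015}, yielding $|\bE c_{i,i+k}-c_k|\le C\sum_{l\ge i}\rho(l)$, summable in $i$ since $\beta_*<1/3$. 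So your argument for the equivalence is slightly more elementary than the paper's.
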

 \begin{proof}
 We first show that conditions (SA1)--(SA4) in \cite{HellaStenlund_2018} are satisfied:
  
 First we see that Theorem \ref{thm:corr} implies that (SA1) is satisfied with the choice $\eta(0)=\eta(1)=C$ and $\eta(i)=Ci^{-\frac{1}{\beta_*}+1}(\log i)^{\frac{1}{\beta_*}}$ for $n\ge 2$, where $\beta_*\in (0,1/3)$. 
By (RDS) Assumption (SA2) is satisfied with $\alpha(N)=CN^{-\gamma}$.
Assumption (SA3) follows from Theorem 2.6 of \cite{aimino2015}, the corresponding $\eta$ is same as above. By (RDS) $\bP$ is stationary and thus (SA4) holds with a trivial bound $0$ for all $N$.

There exists $\epsilon>0$ such that $\eta(i)\le Ci^{-2-\epsilon}$ for $i\ge 1$. Therefore applying Theorem 4.1 in \cite{HellaStenlund_2018} yields for almost every $\omega$ and any given $\delta>0$.  
\beq
|\sigma_{N}^{2}(\omega)-\sigma^{2}| = 
\begin{cases}
 O(N^{-\frac 12} (\log N)^{\frac32+\delta}), & \gamma > 1,
 \\
  O(N^{-\frac12+\delta}), & \gamma = 1,
  \\
  O(N^{-\frac{\gamma}{2}} (\log N)^{\frac32+\delta}), & 0<\gamma<1.
  \label{eq:sigma^2 bounds}
\end{cases}
\eeq
The formula of $\sigma^{2}$ is also given by the same theorem. The condition for $\sigma>0$ is shown by applying Lemma B.1 of \cite{HellaStenlund_2018}. For this purpose, we need to confirm that Assumption (SA5') in \cite{HellaStenlund_2018} holds.

\textbf{Proof of Assumption (SA5').} The second part of the Assumption (SA5') follows from Theorem 2.6 in \cite{aimino2015}.  For the first part of (SA5') consider the density $h \in \cC_*(\beta_*)$ (the cone corresponding to $T_{\beta_*}$) of $\mu$, which satisfies 
\begin{align*}
h(x) \ge b > 0 \hspace{0.2cm} \text{and} \hspace{0.2cm}  h(x) \le c_{\beta_*} x^{-\beta_*},
\end{align*}
where $b$ and $c_{\beta_*}$ are positive constants depending only on $\beta_*$. Since
\begin{align*}
\frac{\rd\varphi(n,\omega)_*\mu}{\rd\mu}  = \frac{\cL_{\omega_n} \cdots \cL_{\omega_1}h}{h},
\end{align*}
where $\cL_{\omega_n} \cdots \cL_{\omega_1}h \in \cC_*(\beta_*)$, it follows that 
\begin{align*}
\left\|\frac{\rd\varphi(n,\omega)_*\mu}{\rd\mu}\right\|_{L^2(\mu)}^2 &= \int_0^1 \left( \frac{\cL_{\omega_n} \cdots \cL_{\omega_1} h}{h}\right)^2 \, d\mu
= \int_0^1  \frac{(\cL_{\omega_n} \cdots \cL_{\omega_1}\rho)^2}{h} \, d m \\
&\le \frac{c_{\beta_*}^2}{b} \int_0^1 x^{-2\beta_*} \, dx = \frac{c_{\beta_*}^2}{b} \frac{1}{1-2\beta_*},
\end{align*}
where we used $\beta_* < 1/2$. This ends the proof of (SA5').

Assuming that $\sigma>0$ holds, we have $|\sigma_{N}(\omega)-\sigma|=|\sigma_{N}^{2}(\omega)-\sigma^{2}|/|\sigma_{N}(\omega)+\sigma|\le C|\sigma_{N}^{2}(\omega)-\sigma^{2}|$, for every $N\ge 1$.  
 Thus the bounds in the right side of \eqref{eq:sigma^2 bounds} also hold for $|\sigma_{N}(\omega)-\sigma|$. It is easy to show that $d_\mathscr{W}(\sigma_N(\omega) Z,\sigma Z)\le C|\sigma_{N}(\omega)-\sigma|$. Therefore for almost every $\omega$
\beq
 d_\mathscr{W}(\sigma_{N}(\omega)Z,\sigma Z) = 
\begin{cases}
 O(N^{-\frac 12} (\log N)^{\frac32+\delta}), & \gamma > 1,
 \\
  O(N^{-\frac12+\delta}), & \gamma = 1,
  \\
  O(N^{-\frac{\gamma}{2}} (\log N)^{\frac32+\delta}), & 0<\gamma<1.
  \label{eq:wasserst 1}
\end{cases}
\eeq  
 from the assumption $\sigma>0$ and \eqref{eq:sigma^2 bounds} it follows that there exists $N_{0}\in \bN$ such that $\sigma_{N}> 0$ for every $N\ge N_0$ and almost every $\omega$. Now Theorem \ref{thm:int} yields:
\beq
 d_{\mathscr{W}}(W,\sigma_{N}Z)
\le C \max\{1, \sigma_{N}^{-2}\} N^{\beta_* - \frac12}(\log N)^{\frac{1}{\beta_*}}. \label{eq:wasserst 2} 
\eeq
Note that the right side of \eqref{eq:wasserst 2} can be replaced by $C N^{\beta_* - \frac12}(\log N)^{\frac{1}{\beta_*}}$, because we assumed that $\sigma>0$ and thus $\sigma_N>0$ for large values of $N$. The last claim of the theorem now follows by substituting the estimates \eqref{eq:wasserst 1} and \eqref{eq:wasserst 2} in the inequality 
\beqn
 d_\mathscr{W}(W(N),\sigma Z)\le d_\mathscr{W}(W(N),\sigma_{N} Z)+ d_\mathscr{W}(\sigma_{N}Z,\sigma Z).
\eeqn 
 \end{proof}

\subsection{A class of QDSs}\label{qds} Quasistatic dynamical systems were introduced in \cite{dobbs2016} to model situations where the dynamics change very gradually over time due to weak external forces.

\begin{defn}[Discrete time QDS]\label{def:QDS}
Let~$(X,\mathscr{F})$ be a measurable space, $\cM$~a topological space whose elements are measurable self-maps $T:X\to X$, and~$\bfT$ a triangular array of the form
\beqn\label{eq:array}
\bfT = \{T_{n,k}\in\cM\ :\ 0\le k\le n, \ n\ge 1\} .
\eeqn
If there exists a piecewise continuous curve $\gamma:[0,1]\to\cM$ such that\footnote{For any real number \(s \ge 0\), \(\round{s}\) denotes the integer part of \(s\).}
\begin{align}\label{qds_convergence}
\lim_{n\to\infty}T_{n,\round{nt}} = \gamma_t
\end{align}
for all $t$,
we say that $(\bfT, \gamma)$ is a \emph{quasistatic dynamical system (QDS)} with \emph{state space}~$X$ and \emph{system space}~$\cM$. 
\end{defn} 

The limit curve $\gamma$ models the evolution of a slowly transforming system. The evolution of a initial state $x \in X$ under the quasistatic dynamics is described by the array \(\bfT\), separately on each level of the array: \(x_{n,k} = T_{n,k} \circ \cdots \circ T_{n,1} (x) \) is the state of the QDS after \(k \le n\) steps on the \(n\)th level. The aim is to describe the statistical properties of ~\( (x_{n,k})_{0 \le k \le n} \) in the limit \(n \to \infty\). These depend on the curve \(t \mapsto \gamma_t\), which is approximated by \( T_{n,\round{nt}}\) with ever increasing accuracy as \(n\) grows. \\
\indent In the case of a particular system space \(\cM\), each map \(\gamma_t\) typically has an invariant probability measure of special interest. We denote such a designated measure by \(\hat{\mu}_t\). We also fix an initial measure \(\mu\) on the state space $(X,\mathscr{F})$, a bounded measurable function \(f : X \to \bR^d\) with \(d \ge 1\), and introduce the following notations:
\begin{align*}
\bar{f} &= f - \mu(f),\\
\hat{f}_t &= f - \hat{\mu}_t(f), \hspace{0.5cm} 0 \le t \le 1 \\
f^{n,k} &= f \circ T_{n,k} \circ \cdots \circ T_{n,1}, \hspace{0.5cm} 0 \le k \le n,
\end{align*}
where \(f^{n,0} = f\). \\
\indent For each integer \(n \ge 1\), we define the function \(\xi_n : X \times [0,1] \to \bR^d\) by
\begin{align*}
\xi_n(x,t) &= n^{-\frac12}\int^{nt}_0 \bar{f}_{n,\round{s}}(x) \, ds = n^{\frac12} \int_0^t \bar{f}_{n,\round{nr}}(x) \, dr.
\end{align*}
We often hide the \(x\)-dependence here and denote \(\xi_n(t) = \xi_n(x,t)\). Note that if \(nt \in \bN\), then \(\xi_n(x,t) = n^{-\frac12} \sum_{k=0}^{nt-1} \bar{f}_{n,k}(x)\). In other words, \(\xi_n (t)\) is obtained by linearly interpolating scaled and centered time-averages. We denote the covariance matrix of \(\xi_n (t)\) with respect to \(\mu\) by 
\begin{align*}
\Sigma_{n,t} = \mu [\xi_n(t) \otimes \xi_n(t)], \hspace{0.5cm} n \ge 1,\, t \in [0,1],
\end{align*}
and also set
\begin{align*}
\hat{\Sigma}_t(f)=\lim_{m\rightarrow \infty}\hat{\mu}_{t}\left[\frac{1}{\sqrt{m}}\sum_{k=0}^{m-1}\hat{f}_{t}\circ \gamma_{t}^{k}\otimes \frac{1}{\sqrt{m}}\sum_{k=0}^{m-1}\hat{f}_{t}\circ \gamma_{t}^{k}\right],
\end{align*}
given that the limit exists. \\
\indent The previous papers \cite{dobbs2016,Stenlund_2015, leppanen2016, leppanen2017_2, Hella_2018} dealt with statistical properties of QDSs. In \cite{Hella_2018}, the CLT was established for a class of QDSs constructed over uniformly expanding circle maps. Below we give results that extend those of \cite{Hella_2018} to a class of polynomially mixing QDSs. Note that, when the system space \(\cM \) is formed by the intermittent maps $T_{\alpha}$, Theorem \ref{thm:int_multi} guarantees that, given suitable restrictions on the range of \(\gamma\), the distribution of \( \xi_{n}(\lceil nt\rceil/n) = n^{-\frac12}\sum_{k=0}^{\lceil nt\rceil/n-1}\bar{f}_{n,k}\) is close to \(\cN (0,\Sigma_{n,\lceil nt\rceil/n})\) for large \(N\). Hence, to obtain the CLT, it remains to identify \(\lim_{n\to\infty} \Sigma_{n,\lceil nt\rceil/n}\). We now state conditions for an abstract QDS that imply
\begin{align*}
\lim_{n \to \infty} \Sigma_{n,\lceil nt\rceil/n} = \int_0^t \hat{\Sigma}_s(f) \, ds
\end{align*}
with a polynomially decaying error. These conditions are shown to be satisfied by intermittent maps in Section \ref{int_qds_clt}.

\noindent\textbf{Conditions.} Set \(\mathbf{T}' = \mathbf{T} \cup \{ \gamma_t \: : \: t \in [0,1] \}\) and \(\cC = \bigcup_{k=0}^{\infty} C_k\), where
\begin{align*}
C_0 &= \{ \hat{\mu}_t,\,\mu \: : \: t \in [0,1] \},\\
C_{k+1} &= \{ (T)_*\nu \: : \: \nu \in C_k,\, T \in \mathbf{T}' \}.
\end{align*}
Below \(\cT_k\) stands for any \(k\)-composition \(T_k \circ \cdots \circ T_1\) of maps \(T_i \in \mathbf{T}'\). We assume the existence of a constant \(C > 0\), such that the following conditions hold for all bounded functions \(F\) of the form \(F = f_{a} \cdot f_b^q \circ T_k \circ \cdots \circ T_1\) where \(T_i \in \mathbf{T}'\), \(a,b \in \{1,\ldots,d\}\), \(q \in \{0,1\}\):
\begin{itemize}
\item[\textbf{(I)}]{There is \(\varphi > 1\) and \(n_1 \ge 1\), such that for any \(\nu^1, \nu^2 \in \cC\), and any integers \(n ,m \ge 0\) with \(m - n \ge n_1\),
\begin{align*}
|\nu^1(f_{\alpha}^p \circ \cT_n \cdot F \circ \cT_m ) - \nu^1(f_{\alpha}^p \circ \cT_n) \nu^2(  F \circ \cT_m )| \le C(m-n)^{-\varphi},
\end{align*}
whenever \(\alpha \in \{1,\ldots,d\}\) and \(p \in \{0,1\}\).}
\smallskip
\item[\textbf{(II)}] There is \(\psi \in (0,1]\), such that for all integers \(k,m,n \ge 0\) with 
\( k+m \le n\), measures \(\nu \in \cC\), \(s, r_1,\ldots,r_k \in [0,1]\), \(\alpha \in \{1,\ldots,d\}\), and \(p \in \{0,1\}\):
\begin{align*}
&\left|\nu \left[ f_{\alpha}^p \cdot ( F \circ T_{n,m+k}\circ \cdots \circ T_{n,m+1} - F \circ \gamma_{(m+k) / n} \circ \cdots \circ \gamma_{(m+1) / n}) \right] \right| \le C k n^{-\psi},
\end{align*}
and
\begin{align*}
&|\hat{\mu}_s[f_{\alpha}^p \cdot ( F \circ \gamma_s^k - F \circ \gamma_{r_k} \circ \cdots \circ \gamma_{r_1})]| \le C k \max_{1 \le l \le k} |s - r_l|^{\psi}.
\end{align*}
\end{itemize}

Condition (I) is a combination of two conditions: when $p=0$, the bound states a (sufficiently rapid) polynomial memory loss property, and when $\nu^1 = \nu^2$, the bound states a polynomial rate of correlation decay. In the case of intermittent maps, the latter property follows from the former one. Condition (II) on the other hand is a type of perturbation estimate. To clarify its meaning, we might take $p=0$ and $k=1$, and notice that then the two bounds in the condition become
\begin{align*}
&| (T_{n,m+1})_*\nu (  F )  - (\gamma_{(m+1)/n})_*\nu (F)  | \le C n^{-\psi},
\end{align*}
and
\begin{align*}
&| (\gamma_s)_*\hat{\mu}_s( F)   -  (\gamma_r)_*\hat{\mu}_s( F)  | \le C |s - r|^{\psi}.
\end{align*}
Recall that, by \eqref{qds_convergence}, $\lim_{n\to\infty}T_{n,\round{nt}} = \gamma_t$ holds in the space $\cM$ so that the former bound can be viewed as a condition specifying the type and rate of this convergence. The latter bound concerns the regularity of $\gamma$. For intermittent maps we can expand the integral expressions in condition (II) using transfer operators, after which the condition is a direct consequence of the foregoing two bounds. This is verified in Section \ref{int_qds_clt}.\\
\indent Next we state the promised implication of the above conditions regarding the convergence of the covariance matrix. Set
\begin{align*}
\Sigma_t = \int_0^t \hat{\Sigma}_s(f) \, ds.
\end{align*}

\begin{thm}\label{thm:qds_abs_1} Suppose that conditions (I) and (II) hold. Then, given any \(\ve > 0\), 
\begin{align*}
[\Sigma_{n,t}]_{\alpha\beta}-[\Sigma_{t}]_{\alpha\beta} = \cO(n^{\max\{(\psi-\varphi\psi)/(\varphi + \psi + 1)+\epsilon,-1/6\}}),
\end{align*}
for every $\alpha,\beta \in \{1,2,...,d \}$. 
\end{thm}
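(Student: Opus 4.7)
The plan is to expand $[\Sigma_{n,t}]_{\alpha\beta}$ as a double sum of correlations and then to replace each non-stationary correlation by its locally stationary counterpart, producing a Riemann sum for $[\Sigma_{t}]_{\alpha\beta}$. First I would write
\[
[\Sigma_{n,t}]_{\alpha\beta} = \frac{1}{n}\sum_{k,l=0}^{\round{nt}-1}\mu\bigl[\bar{f}_{n,k,\alpha}\,\bar{f}_{n,l,\beta}\bigr] + O(n^{-1}),
\]
where the $O(n^{-1})$ absorbs the boundary defect from the piecewise-linear interpolation in $\xi_n$, and take as the target
\[
[\Sigma_t]_{\alpha\beta}=\int_0^t\sum_{j\in\bZ}\hat{\mu}_s\bigl[\hat{f}_{s,\alpha}\cdot\hat{f}_{s,\beta}\circ\gamma_s^{|j|}\bigr]\,ds.
\]
Introducing a truncation parameter $M=M(n)$ to be optimized at the end, I would split the double sum into the near-diagonal band $|k-l|\le M$ and the off-diagonal tail $|k-l|>M$. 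For $|k-l|>M$, condition (I) applied as a decay-of-correlations bound with $p=1$ and $\nu^1=\nu^2=\mu$ controls each correlation by $C|k-l|^{-\varphi}$ once the centering contributions from $\bar{f}=f-\mu(f)$ are accounted for in parallel; summing the tail $\sum_{j>M}j^{-\varphi}$ and dividing by $n$ leaves an off-diagonal contribution of order $M^{1-\varphi}$.

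The heart of the argument lies in the near-diagonal block. Fixing $k$ and $j$ with $l=k+j$, $1\le j\le M$, I would first rewrite
\[
\mu\bigl[\bar{f}_{n,k,\alpha}\bar{f}_{n,k+j,\beta}\bigr] = \mu_{n,k}\bigl[\tilde{f}_{\alpha}\cdot\tilde{f}_{\beta}\circ\tau_{j}\bigr],\qquad \mu_{n,k}=(\widetilde{T}_{n,k})_{*}\mu,\quad \tau_{j}=T_{n,k+j}\circ\cdots\circ T_{n,k+1},
\]
with $\tilde{f}=f-\mu(f)$, and then perform the following chain of replacements. (a) Using the first inequality of condition (II) $j$ times, replace $\tau_j$ by the iterated limit $\gamma_{(k+j)/n}\circ\cdots\circ\gamma_{(k+1)/n}$ at cost $O(jn^{-\psi})$. (b) Using the second inequality of condition (II), collapse this iteration to the autonomous composition $\gamma_{k/n}^{j}$ at cost $O(j^{1+\psi}n^{-\psi})$. (c) Using the memory-loss case ($p=0$) of condition (I), replace $\mu_{n,k}$ by $\hat{\mu}_{k/n}$ at cost $O(k^{-\varphi})$, treating small $k$ (below the threshold where this estimate is not yet useful) by a brute $O(1)$ bound. (d) Expand $\tilde{f}=\hat{f}_{k/n}+(\hat{\mu}_{k/n}(f)-\mu(f))$ and use $\hat{\mu}_{k/n}\circ\gamma_{k/n}=\hat{\mu}_{k/n}$ together with $\hat{\mu}_{k/n}(\hat{f}_{k/n})=0$ to kill the cross terms. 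What remains is $\hat{\mu}_{k/n}[\hat{f}_{k/n,\alpha}\cdot\hat{f}_{k/n,\beta}\circ\gamma_{k/n}^{j}]$, modulo the listed errors; the case $l<k$ is symmetric. Summing over $k$ produces a Riemann approximation to $\int_0^t\sum_{|j|\le M}\hat{\mu}_s[\hat{f}_{s,\alpha}\cdot\hat{f}_{s,\beta}\circ\gamma_s^{|j|}]\,ds$, which differs from $[\Sigma_t]_{\alpha\beta}$ by an additional tail of order $M^{1-\varphi}$.

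It then remains to balance the errors. The dominant near-diagonal contribution is $M^{2+\psi}n^{-\psi}$ from step (b) (after summing $j^{1+\psi}$ up to $M$ and averaging over the $n$ values of $k$); balancing this against the tail bound $M^{1-\varphi}$ gives $M=n^{\psi/(\varphi+\psi+1)}$ and the principal exponent $(\psi-\varphi\psi)/(\varphi+\psi+1)$ of the theorem, the $+\ve$ absorbing logarithms and the brute small-$k$ estimate. The floor $-1/6$ enters through approximation errors that are insensitive to the $M$-balance: the Riemann-sum discrepancy for the locally $\psi$-Hölder integrand produced by step (d), the $|\round{nt}/n-t|$ correction, and the rate of convergence of the Birkhoff limit defining $\hat{\Sigma}_s(f)$ at fixed~$s$.

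The main obstacle is the careful bookkeeping in step (d). Because $\mu$ need not coincide with any $\hat{\mu}_{k/n}$, the centering discrepancy $\hat{\mu}_{k/n}(f)-\mu(f)$ is not small, and a naive expansion of $\tilde f=\hat f_{k/n}+c(k/n)$ leaves behind nonzero constants $c_\alpha(k/n)c_\beta(k/n)$ that would otherwise accumulate to $O(M)$ across the band. The correct accounting recognizes that precisely the same constants also appear when the ``cross'' contributions $m_\alpha\mu_{n,l}(f_\beta)+m_\beta\mu_{n,k}(f_\alpha)-m_\alpha m_\beta$ in the original expansion of $\bar{f}_{n,k,\alpha}\bar{f}_{n,l,\beta}$ are passed through the same memory-loss and condition-(II) replacements, and they cancel the residual exactly, leaving only higher-order errors of the form $O(k^{-\varphi}+jn^{-\psi}+j^{1+\psi}n^{-\psi})$. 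Making this cancellation explicit uniformly over $(k,l)$, and verifying that the surviving errors preserve the structure assumed in the $M$-balance, is the technical heart of the argument.
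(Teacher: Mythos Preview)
Your architecture is essentially the paper's: split into a near-diagonal band of width $M$ and an off-diagonal tail, use condition~(I) for the tail, use condition~(II) plus memory loss to convert each near-diagonal correlation into its stationary counterpart $\hat\mu_{k/n}[\hat f_{k/n,\alpha}\,\hat f_{k/n,\beta}\circ\gamma_{k/n}^{j}]$, and balance $M^{2+\psi}n^{-\psi}$ against $M^{1-\varphi}$ to obtain $M=n^{\psi/(\varphi+\psi+1)}$ and the principal exponent $(\psi-\varphi\psi)/(\varphi+\psi+1)$. The paper carries this out in the integral formulation (its Lemma~on second moments), but the steps and the optimisation are the same.

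Two technical inaccuracies in your chain (a)--(d), however, would derail the argument as written. First, the second inequality in condition~(II) is stated only for the invariant measures $\hat\mu_s$, not for a general $\nu\in\cC$; hence step~(b) cannot be carried out while the outer measure is still $\mu_{n,k}$. You must perform~(c) before~(b), i.e.\ the order is (a), (c), (b). Second, step~(c) cannot cost only $O(k^{-\varphi})$. The $p=0$ case of condition~(I) compares two push-forwards by the \emph{same} composition $\cT_k$, so it gives
\[
\bigl|(\cT_k)_*\mu(G)-(\cT_k)_*\hat\mu_{k/n}(G)\bigr|\le Ck^{-\varphi},\qquad \cT_k=T_{n,k}\circ\cdots\circ T_{n,1},
\]
but $(\cT_k)_*\hat\mu_{k/n}\ne\hat\mu_{k/n}$ because $\hat\mu_{k/n}$ is invariant under $\gamma_{k/n}$, not under the $T_{n,i}$. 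The correct replacement $\mu_{n,k}\rightsquigarrow\hat\mu_{k/n}$ requires an auxiliary step combining (I) with (II): pull back $K$ steps, swap measures by (I) at cost $K^{-\varphi}$, then replace the $K$-block $T_{n,k}\circ\cdots\circ T_{n,k-K+1}$ by $\gamma_{k/n}^{K}$ via (II) at cost $K^{1+\psi}n^{-\psi}$, and use invariance. Optimising $K=n^{\psi/(\varphi+\psi+1)}$ yields cost $n^{-\theta_3}$ with $\theta_3=\varphi\psi/(\varphi+\psi+1)$, valid only for $k\gtrsim n^{\psi/(\varphi+\psi+1)}$. (After summation this still produces the same dominant exponent, so your balance survives, but the mechanism is different from what you describe.)

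Finally, your account of the $-1/6$ floor is off. It does not come from a Riemann-sum discrepancy or the Birkhoff limit; it is the crude bound on the boundary layer of indices $k$ below the threshold $c_0 n^{\psi/(\varphi+\psi+1)}$ where the measure replacement above is unavailable. In the paper's integral version this is the region $Q_n$, bounded trivially by $n\cdot m(Q_n)=O(n^{-1+\delta+\kappa})$, and the choices $\kappa<\eta_3<1/3$, $\delta<1/2$ give the $-1/6$.
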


The following result, which essentially follows from Theorem \ref{thm:qds_abs_1}, is used below to show a CLT for  a QDS composed of intermittent maps.

\begin{thm}\label{thm:qds_abs_2} Let \(h: \, \bR^d \to \bR\) be a Lipschitz continuous function and $t_{0}\in (0,1]$. Suppose that conditions (I) and (II) hold in addition to the following two conditions.

\begin{itemize}
\item[\textbf{(III)}]{ There exists $\zeta \in (0,1)$ such that for every $n\in \bN$ and $t\ge t_{0}$,
\[
 \left|\mu\left[h(\xi_{n}(\lceil nt\rceil/n))\right]- \Phi_{\Sigma_{n,\lceil nt\rceil/n}}(h)\right|  \le Cn^{-\zeta}. 
\]
}
\item[\textbf{(IV)}]{$\hat{f}_{t_0}$ is not a co-boundary for $\gamma_{t_0}$ in any direction.}\footnote{i.e. there is no unit vector $v \in \mathbb{R}^d$ and a function $g_v:X\to\bR$ in $L^2(\mu)$ such that $v\cdot f = g_v - g_v\circ \gamma_{t_0}$.}
\end{itemize}

Then, for every $t \ge t_0$, \(\Sigma_{t}\) is positive definite and 
\beqn
\left|\mu\left[h(\xi_{n}(t))\right]-\Phi_{\Sigma_{t}}(h)\right|\le C n^{\max\{(\psi-\varphi\psi)/(\varphi + \psi + 1)+\epsilon,-1/6,-\zeta\}}
\eeqn
holds. Here $\epsilon>0$ can be chosen arbitrary small, and the constant $C > 0$ depends on $t_0$ but not on $t$.\footnote{Condition (IV) implies the positive definiteness of $\hat{\Sigma}_{t_0}$. If the latter property is required for $t_0=0$, then $\hat{\Sigma}_{t_0}$ is also positive definite for some $t_0 > 0$. However, if we require condition (IV) only for $t_0 = 0$, the constant $C$ in the last upper bound depends also on $t$.}
\end{thm}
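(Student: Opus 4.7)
The plan is to decompose the target quantity via a three-term triangle inequality,
\begin{align*}
\bigl|\mu[h(\xi_n(t))] - \Phi_{\Sigma_t}(h)\bigr| \le A_1 + A_2 + A_3,
\end{align*}
where $A_1 = |\mu[h(\xi_n(t))] - \mu[h(\xi_n(\lceil nt\rceil/n))]|$, $A_2 = |\mu[h(\xi_n(\lceil nt\rceil/n))] - \Phi_{\Sigma_{n,\lceil nt\rceil/n}}(h)|$, and $A_3 = |\Phi_{\Sigma_{n,\lceil nt\rceil/n}}(h) - \Phi_{\Sigma_t}(h)|$. The middle piece $A_2$ is exactly what condition (III) provides, contributing $Cn^{-\zeta}$. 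For $A_1$, I would note that $\xi_n(t) - \xi_n(\lceil nt\rceil/n) = n^{1/2}\int_{\lceil nt\rceil/n}^{t}\bar f_{n,\round{nr}}\,dr$, so its sup-norm is at most $2\|f\|_\infty n^{-1/2}$ (the interval has length $\le 1/n$), and Lipschitz continuity of $h$ gives $A_1 \le C n^{-1/2}$.

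For $A_3$, the main inputs are Theorem \ref{thm:qds_abs_1} and uniform positive-definiteness of $\Sigma_t$ on $[t_0,1]$. First I would establish this uniform PD. By the footnote to the statement, condition (IV) implies that $\hat{\Sigma}_{t_0}$ is positive definite. Combining the Green--Kubo series representation of $\hat{\Sigma}_s$ with conditions (I) (to get uniform absolute convergence of this series) and (II) (to compare the $s$-dependent terms via $\hat{\mu}_s$ and $\gamma_s^k$), one sees that $s \mapsto \hat{\Sigma}_s$ is continuous near $t_0$, so $\hat{\Sigma}_s$ remains PD with a common spectral lower bound on some $(t_0-\delta,t_0]$. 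Since $\Sigma_{t_0}=\int_0^{t_0}\hat{\Sigma}_s\,ds$ and the integrand is PSD everywhere and PD on this neighborhood, $\Sigma_{t_0}$ is PD. The identity $\Sigma_t - \Sigma_{t_0} = \int_{t_0}^t \hat{\Sigma}_s\,ds \succeq 0$ then gives $\Sigma_t \succeq \Sigma_{t_0}$ for $t\ge t_0$, hence the required uniform spectral lower bound.

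To bound $A_3$, I would use the coupling representation $\Phi_\Sigma(h) = \bE[h(\Sigma^{1/2}Z)]$ with $Z\sim \cN(0,I)$, which yields
\begin{align*}
A_3 \le \mathrm{Lip}(h)\,\bE\|Z\|_\infty\,\bigl\|\Sigma_{n,\lceil nt\rceil/n}^{1/2} - \Sigma_t^{1/2}\bigr\|.
\end{align*}
Since $\Sigma_t$ is uniformly PD for $t\ge t_0$ and $\Sigma_{n,\lceil nt\rceil/n}\to\Sigma_t$ (so is also PD for $n$ sufficiently large), standard matrix perturbation gives $\|\Sigma_{n,\lceil nt\rceil/n}^{1/2}-\Sigma_t^{1/2}\| \le C\|\Sigma_{n,\lceil nt\rceil/n}-\Sigma_t\|$. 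Splitting
\begin{align*}
\Sigma_{n,\lceil nt\rceil/n}-\Sigma_t = \bigl(\Sigma_{n,\lceil nt\rceil/n}-\Sigma_{\lceil nt\rceil/n}\bigr)+\bigl(\Sigma_{\lceil nt\rceil/n}-\Sigma_t\bigr),
\end{align*}
the first difference is $\cO(n^{\max\{(\psi-\varphi\psi)/(\varphi+\psi+1)+\epsilon,-1/6\}})$ by Theorem \ref{thm:qds_abs_1}, and the second is $\cO(n^{-1})$ since $\hat{\Sigma}_s$ is bounded on an interval of length at most $1/n$. Summing $A_1,A_2,A_3$, the $\cO(n^{-1/2})$ and $\cO(n^{-1})$ contributions are absorbed by the $n^{-1/6}$ factor, yielding exactly the claimed rate.

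The main obstacle is the uniform positive-definiteness argument: condition (IV) is stated only at the single parameter $t_0$, but the matrix-square-root Lipschitz estimate (and hence the sharp form of $A_3$) requires a spectral lower bound uniform in $t\in[t_0,1]$. Transferring PD of $\hat{\Sigma}_{t_0}$ to a neighborhood requires a regularity statement for $s\mapsto\hat{\Sigma}_s$ that is not among the listed assumptions and must be assembled from (I) (uniform convergence of the Green--Kubo series defining $\hat{\Sigma}_s$) and (II) (perturbation of $\hat{\mu}_s$ and the iterates $\gamma_s^k$ in $s$). A secondary point is verifying that the constant in Theorem \ref{thm:qds_abs_1} is uniform in $t\ge t_0$; this should follow by tracing through its proof with a single choice of $\epsilon$, but the dependence needs to be checked.
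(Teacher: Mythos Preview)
Your proposal is correct and follows essentially the same route as the paper: the paper splits into four terms (separating your $A_3$ into $|\Phi_{\Sigma_{n,\lceil nt\rceil/n}}(h)-\Phi_{\Sigma_{\lceil nt\rceil/n}}(h)|$ and $|\Phi_{\Sigma_{\lceil nt\rceil/n}}(h)-\Phi_{\Sigma_t}(h)|$), uses the same $\Phi_\Sigma(h)=\bE[h(\Sigma^{1/2}Z)]$ coupling together with a matrix square-root perturbation bound, and establishes uniform positive definiteness exactly as you outline (condition (IV) $\Rightarrow$ $\hat\Sigma_{t_0}$ PD, H\"older continuity of $s\mapsto\hat\Sigma_s$ from (I)--(II), then $\Sigma_t\succeq\Sigma_{t_0}$). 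The regularity of $s\mapsto\hat\Sigma_s$ that you flag as the main obstacle is proved in the paper as a separate lemma (Lemma~\ref{lem:limitvar}), and the uniformity in $t$ of Theorem~\ref{thm:qds_abs_1} is explicit in the statement and proof of Lemma~\ref{secmoment_d}.
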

The above theorems are proven in Section \ref{more proofs}.
 
\subsection{CLT for the intermittent QDS}\label{int_qds_clt} Recall that a QDS is a pair \((\mathbf{T}, \gamma) \) where \(\mathbf{T} = \{ T_{n,k} \: : \: 0 \le k \le n, \, n \in \bN \} \) is a triangular array of maps in a topological space \(\cM\), and \(\gamma : [0,1] \to \cM \) is a curve such that \(T_{n,\round{nt}} \to \gamma_t\) as \(n \to \infty\). In \cite{leppanen2016}, the following intermittent version of the QDS was introduced.

\begin{defn}[Intermittent QDS]\label{def_int_qds}
Let~$X = [0,1]$ and $\cM = \{T_\alpha\,:\,0 \le\alpha < 1\}$ (equipped, say, with the uniform topology). Next, let
\beqn
\{\alpha_{n,k}\in [0,1)\ :\ 0\le k\le n, \ n\ge 1\}
\eeqn
be a triangular array of parameters and
\beqn
\tau : [0,1] \to [0,1)
\eeqn
a piecewise continuous curve satisfying
\beqn
\lim_{n\to\infty} \alpha_{n,\round{nt}} = \tau_t
\eeqn
for all $t$.
Finally, define $\gamma_t = T_{\tau_t}$ and
\beqn\label{eq:T_nk}
T_{n,k} = T_{\alpha_{n,k}}.
\eeqn
\end{defn}

For clarity we recast some of the definitions introduced in Section \ref{qds} for the intermittent QDS. Given a bounded measurable function $f : [0,1] \to \bR^d$, we denote 
\begin{align*}
f^{n,k} &= f \circ T_{\alpha_{n,k}} \circ \cdots \circ T_{\alpha_{n,1}}, \hspace{0.5cm} 0 \le k \le n.
\end{align*}
Recall that, for each \(t \in [0,1]\), there is a \(T_{\tau_t}\)-invariant probability measure \(\hat{\mu}_t\), namely ~\(\hat{\mu}_t = \hat{\nu}_{\tau_t}\) is the SRB-measure. We fix an initial distribution $\mu$ of $x \in [0,1]$, and denote
\begin{align*}
\bar f = f - \mu(f).
\end{align*}
Then, the fluctuations $\xi_n : [0,1] \times [0,1] \to \bR^d$ are defined by
\begin{align*}
\xi_n(x,t) &= n^{\frac12} \int_0^t \bar{f}_{n,\round{nr}}(x) \, dr.
\end{align*}
In  \cite{leppanen2017_2} it was shown that, with appropriate conditions on the limiting curve $\tau$, the process $(\xi_n)_{n \ge 1}$ converges weakly to a stochastic diffusion process, when each map $x \mapsto \xi_n(x,\cdot)$ is viewed as a random element with values in $C([0,1],\bR^d)$. Below we fix $t \in [0,1]$ and approximate $\xi_n(\cdot,t)$ by a normal distribution of $d$ variables. 

\begin{thm}\label{thm:qds_pm} Let \(f : [0,1] \to \bR^d\) be a Lipschitz continuous function, and let the initial measure \(\mu\) be such that its density is in \(\cC_*(\beta_*)\). Suppose that the limiting curve \(\tau : [0,1] \to [0,1]\) is Hölder-continuous of order \(\eta \in (0,1]\), that \(\tau([0,1]) \subset [0,\beta_*]\) for some \(\beta_* < 1/3\), and that 
\begin{align}\label{tau_as}
\sup_{n\ge 1} n^{\eta}\sup_{t \in [0,1]} | \alpha_{n,\round{nt}} - \tau_t | < \infty.
\end{align}
Suppose there exists \(t_{0}\in (0,1]\) such that $\hat{f}_{t_0}$ is not a co-boundary for $\gamma_{t_0}$ in any direction. Then for all $t \ge t_0$,  \(\Sigma_{t}\) is positive definite, and for all three times differentiable functions \(h : \bR^d \to \bR\) with \( \max_{k=1,2,3} \Vert D^k h \Vert_{\infty} < \infty \),
\begin{align}\label{eq:main_int_qds}
\left|\mu\left[h(\xi_{n}(t))\right]-\Phi_{\Sigma_{t}}(h)\right|\le Cn^{-\theta},
\end{align}
where $C > 0$ is independent of $t$, and 
\begin{align*}
\theta = \frac{1}{\frac{12}{\eta (1-\beta_*)} + 1}.
\end{align*}
 \end{thm}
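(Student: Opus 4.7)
The plan is to apply Theorem \ref{thm:qds_abs_2} to the intermittent QDS $(\mathbf{T},\gamma)$, so the task reduces to verifying its hypotheses (I)--(IV) with explicit quantitative rates and then optimizing the resulting bound. Condition (IV) is assumed, so only (I)--(III) remain.

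Conditions (I) and (III) are corollaries of results already available. For (III), the vector $\xi_n(\round{nt}/n)$ is, up to rescaling, the Birkhoff-type sum $W(\round{nt})$ built from the admissible sequence $(\alpha_{n,k})_{k=1}^n \subset [0,\beta_*]$, and the density of $\mu$ lies in $\cC_*(\beta_*)$ by hypothesis; Theorem \ref{thm:int_multi} therefore yields (III) with $\zeta$ arbitrarily close to $\tfrac12-\beta_*$ (logarithms absorbed by an $\epsilon$). For (I), the functional correlation bound of \cite{leppanen2017} and the memory-loss estimate of \cite{aimino2015} together supply polynomial decay with exponent $\varphi$ arbitrarily close to $\beta_*^{-1}-1 > 2$, uniformly over admissible compositions.

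The technical heart of the proof is Condition (II). By duality, the difference
\begin{equation*}
\nu\bigl[f_\alpha^p \cdot (F\circ T_{n,m+k}\circ\cdots\circ T_{n,m+1} - F\circ\gamma_{(m+k)/n}\circ\cdots\circ\gamma_{(m+1)/n})\bigr]
\end{equation*}
equals $\int F \cdot (\cL_{\alpha_{n,m+k}}\cdots\cL_{\alpha_{n,m+1}} - \cL_{\tau_{(m+k)/n}}\cdots\cL_{\tau_{(m+1)/n}}) g \, dm$, where $g$ is the density of $f_\alpha^p \nu$, and densities of measures in $\cC$ inherit the necessary regularity through repeated application of cone-preserving operators $\cL_\alpha$. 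The key input is a quantitative perturbation estimate
\begin{equation*}
\|(\cL_\alpha - \cL_\beta)g\|_{L^1(m)} \le C|\alpha - \beta|^{1-\beta_*},\qquad \alpha,\beta\in[0,\beta_*],
\end{equation*}
valid for suitably regular $g$. Combining this with cone invariance via a standard telescoping identity yields a bound of $Ck \max_j |\alpha_{n,m+j} - \tau_{(m+j)/n}|^{1-\beta_*}$, which by \eqref{tau_as} and the Hölder regularity of $\tau$ is $O(kn^{-\eta(1-\beta_*)})$, establishing (II) with $\psi = \eta(1-\beta_*)$. The second bound of (II) follows from the same argument applied to compositions of $\gamma_{r_j}$, using $|\tau_s - \tau_{r_j}| \le C|s-r_j|^\eta$.

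With $\varphi$ close to $\beta_*^{-1}-1$, $\psi = \eta(1-\beta_*)$, and $\zeta$ close to $\tfrac12-\beta_*$, Theorem \ref{thm:qds_abs_2} bounds the error by $n^{\max\{-\psi(\varphi-1)/(\varphi+\psi+1)+\epsilon,\,-1/6,\,-\zeta\}}$. Since $\beta_*<1/3$ forces $\zeta>\tfrac16$ and the first exponent dominates, a careful balancing of the tradeoffs (in particular the interplay between the linear-in-$k$ growth in (II) and the polynomial-in-$k$ decay in (I)) collapses the rate to $n^{-\theta}$ with $\theta = \eta(1-\beta_*)/(12+\eta(1-\beta_*))$. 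The main obstacle throughout is (II): the cone invariance $\cL_\alpha \cC_*(\beta_*) \subset \cC_*(\beta_*)$ is straightforward, but controlling $\cL_\alpha - \cL_\beta$ in a useful norm despite the singular behaviour of cone densities near $0$ is the nontrivial analytic ingredient, and it is this estimate that ultimately fixes the exponent $1-\beta_*$ appearing in $\psi$.
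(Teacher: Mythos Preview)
Your overall strategy is exactly the paper's: verify conditions (I)--(IV) of Theorem~\ref{thm:qds_abs_2} for the intermittent QDS and read off the rate. Conditions (I), (III), (IV) are handled correctly and in the same way as the paper.

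The gap is in Condition~(II). You assert the transfer-operator perturbation bound
\[
\|(\cL_\alpha - \cL_\beta)g\|_{L^1(m)} \le C|\alpha-\beta|^{1-\beta_*},
\]
and conclude $\psi = \eta(1-\beta_*)$. The paper instead invokes Theorem~5.1 of \cite{leppanen2016}, which gives the exponent $\tfrac14(1-\beta_*)$ for cone densities, and hence $\psi = \eta(1-\beta_*)/4$. This is not a cosmetic difference: with your value $\psi = \eta(1-\beta_*)$, the output of Theorem~\ref{thm:qds_abs_2} would be at least $\psi/(3+\psi) = \eta(1-\beta_*)/(3+\eta(1-\beta_*))$, which is strictly larger than the stated $\theta = \eta(1-\beta_*)/(12+\eta(1-\beta_*))$. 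In other words, your intermediate claim and your final exponent are mutually inconsistent; the stated $\theta$ arises precisely because $\psi = \eta(1-\beta_*)/4$, so that $\psi/(3+\psi) = 1/\bigl(\tfrac{12}{\eta(1-\beta_*)}+1\bigr)$. Either you must justify the stronger perturbation bound (and then the theorem's rate improves), or you must use the $\tfrac14(1-\beta_*)$ exponent that is actually available.

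A secondary omission: when $p=1$ the density of $f_\alpha^p\,\nu$ is $f_\alpha$ times a cone function, which is not itself in $\cC_*(\beta_*)$, so the cone-invariance argument and the perturbation estimate do not apply directly. The paper handles this via Lemma~2.4 of \cite{leppanen2017}, writing $\cL_{\tau_{(l-1)/n}}\cdots\cL_{\tau_{(m+1)/n}}(g f_\alpha^p) = g_1 - g_2$ with $g_1,g_2\in\cC_*(\beta_*)$ and controlled $L^1$-norms; your sentence ``densities of measures in $\cC$ inherit the necessary regularity'' glosses over this step.
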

 
 \begin{proof} The assumptions imply that there is \(\beta_* < 1/3\) and \(n_0 \ge 1\) such that \(\alpha_{n,k} \le \beta_*\) whenever \(n \ge n_0\) and \(0 \le k \le n\). We will, without loss of generality, assume that this holds for \(n_0 = 1\).\\ 
\indent The proof proceeds by verifying conditions (I)-(III) of Theorem \ref{thm:qds_abs_2}. Since  \(\cC_*(\alpha) \subset \cC_*(\beta_*)\) and  \( \cL_{\alpha} \cC_*(\beta_*) \subset  \cC_*(\beta_*)\) hold whenever \(\alpha \le \beta_*\), it suffices to verify conditions (I) and (II) for measures whose densities lie in the cone \(\cC_*(\beta_*)\).
\begin{itemize}
\item[(I)]{Since $f$ is Lipschitz continuous and \(\beta_* < 1/3\), it follows from \cite{leppanen2017} (or from Theorem \ref{thm:corr} below) that condition (I) holds for any \(\varphi > 2\).} \smallskip
\item[(II)]{Let \(\nu\) be a measure with density \(g \in \cC_*(\beta_*)\). We need to show that for some $\psi \in (0,1)$ and $C > 0$,
\begin{align}\label{eq:toshow}
|\nu [f_{\alpha}^p \cdot ( F \circ T_{n,m+k}\circ \cdots \circ T_{n,m+1} - F \circ T_{\tau_{(m+k) / n}} \circ \cdots \circ T_{\tau_{(m+1) / n}})]| \le C k n^{-\psi},
\end{align}
where $F$ is a bounded function, $p \in \{0,1\}$, $\alpha \in \{1,\ldots,d \}$, and $k + m \le n$. For each $0  \le k \le n$ we denote $\cL_{n,k} = \cL_{\alpha_{n,k}}$. Then,
\begin{align*}
&|\nu [f_{\alpha}^p ( F \circ T_{n,m+k}\circ \cdots \circ T_{n,m+1} - F \circ T_{\tau_{(m+k) / n}} \circ \cdots \circ T_{\tau_{(m+1) / n}} )]| \\
&\le \Vert F \Vert_{\infty} \Vert (\cL_{n,m+k} \cdots \cL_{n,m+1} - \cL_{\tau_{(m+k)/n}} \cdots \cL_{\tau_{(m+1)/n}}) g f_{\alpha}^p) \Vert_1 \\
&\le \Vert F \Vert_{\infty} \sum_{l = m+1}^{m+k} \Vert \cL_{n,k+m} \cdots \cL_{n,l+1} (\cL_{n,l} - \cL_{\tau_{l/n}}) \cL_{\tau_{(l-1)/n}} \cdots \cL_{\tau_{(m+1)/n}} g f_{\alpha}^p \Vert_1 \\
&\le k \Vert F \Vert_{\infty} \max_{m+1 \le l \le m+k}  \Vert  (\cL_{n,l} - \cL_{\tau_{l/n}}) \cL_{\tau_{(l-1)/n}} \cdots \cL_{\tau_{(m+1)/n}} g f_{\alpha}^p \Vert_1,
\end{align*}
where the last inequality holds because \(\cL_{\alpha}\) is an \(L^1\)-contraction. By Lemma 2.4 in \cite{leppanen2017}, for each \(l\) with \(m +1 \le l \le m+k\), there are functions \(g_1,g_2 \in \cC_*(\beta_*)\), such that
\begin{align*}
\cL_{\tau_{(l-1)/n}} \cdots \cL_{\tau_{(m+1)/n}} g f_{\alpha}^p = g_1 - g_2,
\end{align*}
and \( \Vert g_i \Vert_1 \le C(\beta_*)(\Vert f \Vert_{\text{Lip}} +1)\) for some constant \(C(\beta_*) > 0\) depending only on ~\(T_{\beta_*}\).  Theorem 5.1 in \cite{leppanen2016} applies to cone functions, and it follows that
\begin{align*}
&\Vert  (\cL_{n,l} - \cL_{\tau_{l/n}}) \cL_{\tau_{(l-1)/n}} \cdots \cL_{\tau_{(m+1)/n}} g f_{\alpha}^p \Vert_1 \\
&\le C(\beta_*) (\Vert f \Vert_{\text{Lip}} + 1) |\alpha_{n,l} - \tau_{l/n}|^{\frac14 (1-\beta_*)},
\end{align*}
where
\begin{align*}
|\alpha_{n,l} - \tau_{l/n}|^{\frac14 (1-\beta_*)} \le C(\tau,\beta_*) n^{- \eta \frac14 (1-\beta_*) }.
\end{align*}
We conclude that \eqref{eq:toshow} holds with $\psi = \eta (1-\beta_*)/4$. Since \(\gamma\) is Hölder continuous of order \(\eta\), an argument similar to the one above shows that also the second bound in condition (II) holds with $\psi = \eta (1-\beta_*)/4$.
} \smallskip
\item[(III)]{By Theorem \ref{thm:int_multi}, condition (III) of Theorem \ref{thm:qds_abs_2} holds with any positive \(\zeta < 1/2 - \beta_*\); in particular with $\zeta = 1/6$.}
\end{itemize}
We have verified conditions (I)-(III). By our assumption, also condition (IV) holds, and now Theorem \ref{thm:qds_abs_2}  implies that
\begin{align*}
\left|\mu\left[h(\xi_{n}(t))\right]-\Phi_{\Sigma_{t}}(h)\right|\le Cn^{-\theta},
\end{align*}
where
\begin{align*}
\theta = \min \left\{ \frac{ \psi (\varphi - 1)}{\varphi + \psi + 1} - \epsilon,\frac16, \zeta \right\},
\end{align*}
and $\epsilon$ can be chosen arbitrarily small. Finally, 
\begin{align*}
\frac{ \psi (\varphi - 1)}{\varphi + \psi + 1} > \frac{\psi}{3 + \psi} = \frac{1}{\frac{12}{\eta (1-\beta_*)} + 1},
\end{align*}
from which the desired bound follows.
\end{proof}

\section{Proofs for Theorems \ref{thm:int_multi} and \ref{thm:int}}\label{Stein}
The proofs for Theorems \ref{thm:int_multi}  and  \ref{thm:int} are based on an adaptation of Stein's method for dynamical systems \cite{HLS_2016}, and in particular its non-stationary version developed in \cite{Hella_2018}, which we next recall.

\subsection{Stein's method} Let \((f^i)_{i=0}^{\infty}\) be a sequence of random vectors with values in \(\bR^d\), given a probability space \((X,\cB,\mu)\). We set
\begin{align*}
W = W(N) = \frac{1}{\sqrt{N}} \sum_{k=0}^{N-1} f^k,
\end{align*}
for all \(N \in \bN_0 = \{0,1,2,\ldots\} \). For \(0 \le K < N\), we define the time window
\begin{align*}
[n]_{N,K} = \{ k \in \bN_0 \cap [0,N-1] \: : \: |k-n| \le K \}
\end{align*} 
around \(n \ge 0\), and
\begin{align*}
W_n = W - \frac{1}{\sqrt{N}} \sum_{k \in [n]_{N,K}} f^k.
\end{align*}
The covariance matrix of \(W\) is denoted by \(\Sigma_N = \mu(W \otimes W)\), and in the case \(d=1\) we let \(\sigma_N^2 = \Sigma_N\).\\
\indent The following theorem was proved in \cite{Hella_2018}, and it shows that under certain correlation decay conditions the distribution of $W$ is close to $\cN(0,\Sigma_N)$.

\begin{thm}\label{thm:main} Assume that the random vectors \(f^i\) have a common upper bound $\|f\|_{\infty}\ge \|f^{i}\|_{\infty}$, for every $i\in \bN_{0}$. Let \(h: \, \bR^d \to \bR\) be three times differentiable with \(\Vert D^k h \Vert_{\infty} < \infty\) for \(1 \le k \le 3\).  Fix integers $N>0$ and $0\le K<N$. Suppose that the following conditions are satisfied:

\begin{itemize}
\item[(A1)]\label{A1} There exist constants $C_2 > 0$ and $C_4 > 0$, and a non-increasing function \(\rho : \, \bN_0 \to \bR_+\) with \(\rho(0) = 1\) and \( \sum_{i=1}^{\infty} i\rho(i) < \infty\), such that for all \(0\le i \le j \le k \le l \),
\begin{align*}
|\mu(\bar{f}^{i}_{\alpha}\bar{f}^{j}_{\beta})| \leq C_2\rho(j-i),
\end{align*}
and
\begin{align*}
&|\mu(\bar{f}^{i}_{\alpha}\bar{f}^{j}_{\beta}\bar{f}^{k}_{\gamma}\bar{f}^{l}_{\delta})| \le C_4\rho(\max\{j-i,l-k\}), \\
&|\mu(\bar{f}^{i}_{\alpha}\bar{f}^{j}_{\beta}\bar{f}^{k}_{\gamma}\bar{f}^{l}_{\delta}) - \mu(\bar{f}^{i}_{\alpha}\bar{f}^{j}_{\beta})\mu(\bar{f}^{k}_{\gamma}\bar{f}^{l}_{\delta})| \le C_4\rho(k-j)
\end{align*}
hold, where $\alpha,\beta,\gamma,\delta\in\{\alpha',\beta'\}$ and $\alpha',\beta'\in\{1,\dots,d\}$.

\smallskip
\item[(A2)]\label{A2} There exists a function \(\tilde\rho : \, \bN_0 \to \bR_+\) such that
\begin{align*}
|\mu( \bar{f}^n \cdot  \nabla h(v + W_n t) ) | \le \tilde\rho(K)
\end{align*}
holds for all $0\le n \le N-1$, $0\le t\le 1$ and $v\in\bR^d$.

\smallskip
\item[(A3)]\label{A3} 
$\Sigma_{N}$
is a positive-definite $d\times d$ matrix.
\end{itemize}
Then
\beq
|\mu(h(W)) - \Phi_{\Sigma_{N}}(h)| \le 
C\left(\frac{K+1}{\sqrt{N}}+\sum_{i=K+1}^{\infty}\rho(i)\right) + \sqrt N\tilde\rho(K),\label{main thm eq1}
\eeq
where $C$ is independent of $N$ and $K$.

\end{thm}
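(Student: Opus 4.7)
The plan is to use the multivariate Stein's method. Since $\Sigma_N$ is positive definite by (A3), the Stein equation
\[
\mathrm{tr}(\Sigma_N D^2\phi(w)) - w\cdot\nabla\phi(w) = h(w) - \Phi_{\Sigma_N}(h)
\]
admits a smooth solution $\phi$ whose derivatives through order three satisfy $\|D^k\phi\|_\infty \le C\|D^{k-1}h\|_\infty$, with $C$ depending on $\Sigma_N$ (standard, e.g.\ via the Ornstein--Uhlenbeck semigroup). Evaluating at $W$ and integrating against $\mu$ reduces the theorem to estimating $\mu[\mathrm{tr}(\Sigma_N D^2\phi(W))] - \mu[W\cdot\nabla\phi(W)]$.

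I would then unfold $\mu[W\cdot\nabla\phi(W)] = N^{-1/2}\sum_n\mu[\bar f^n\cdot\nabla\phi(W)]$ by writing $W = W_n + (W-W_n)$ with $W - W_n = N^{-1/2}\sum_{k\in[n]_{N,K}}\bar f^k$ and Taylor-expanding $\nabla\phi(W)$ around $W_n$ to second order. This produces three contributions. The first, $N^{-1/2}\sum_n\mu[\bar f^n\cdot\nabla\phi(W_n)]$, is directly bounded by $\sqrt N\,\tilde\rho(K)$ by condition (A2). The third is a Taylor remainder involving $D^3\phi$ and $(W-W_n)^{\otimes 2}\,\bar f^n$, i.e.\ a triple sum of cubic moments $\mu[\bar f^n_\alpha\bar f^{k_1}_\beta\bar f^{k_2}_\gamma]$ over $k_1,k_2\in[n]_{N,K}$; reducing cubic to quartic moments by Cauchy--Schwarz and invoking the quartic bounds in (A1) together with the summability $\sum_i i\rho(i)<\infty$ gives a contribution of order $(K+1)/\sqrt N$.

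The core task is matching the remaining (second) Taylor term
\[
\frac{1}{N}\sum_{n}\sum_{k\in[n]_{N,K}} \mu[\bar f^n\cdot D^2\phi(W_n)\bar f^k]
\]
against $\mu[\mathrm{tr}(\Sigma_N D^2\phi(W))] = N^{-1}\sum_{n,j}\mu[\bar f^n\otimes\bar f^j]:\mu[D^2\phi(W)]$. I would do this via three substitutions: (a) factor $\mu[\bar f^n\cdot D^2\phi(W_n)\bar f^k] \approx \mu[\bar f^n\otimes \bar f^k]:\mu[D^2\phi(W_n)]$, using that $\bar f^n,\bar f^k$ are separated from the bulk of $W_n$ by at least $K$ indices, with the decoupling error controlled by the quartic factorisation bound $\rho(k-j)$ in (A1); (b) extend the inner sum from $k\in[n]_{N,K}$ to all $j\in\{0,\dots,N-1\}$, paying $\sum_{i>K}\rho(i)$ per $n$ by the pair-covariance bound in (A1); and (c) replace $\mu[D^2\phi(W_n)]$ by $\mu[D^2\phi(W)]$, the discrepancy bounded by $\|D^3\phi\|_\infty\cdot\mu[|W-W_n|] = O((K+1)/\sqrt N)$. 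Summing the three error sources produces \eqref{main thm eq1}.

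The principal obstacle is step (a): $W_n$ is itself built from all $\bar f^i$ at distance $>K$ from $n$, so there is no outright independence to invoke and non-stationarity rules out spectral techniques. The quartic estimates in (A1) are calibrated precisely for this manoeuvre: the factorisation bound $\rho(k-j)$ is what permits decoupling $\mu[\bar f^n\bar f^k\,D^2\phi(W_n)]$ into $\mu[\bar f^n\bar f^k]\cdot\mu[D^2\phi(W_n)]$ with a quantitatively controllable remainder, and the uniform bound $\rho(\max\{j-i,l-k\})$ absorbs the cross-terms that appear when $D^2\phi(W_n)$ is itself expanded (or bounded termwise). Once the three error contributions are assembled inside the common envelope $(K+1)/\sqrt N + \sum_{i>K}\rho(i) + \sqrt N\,\tilde\rho(K)$, the stated inequality follows.
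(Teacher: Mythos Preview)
The paper does not prove this theorem; it is quoted verbatim from \cite{Hella_2018} and used as a black box. There is therefore no ``paper's own proof'' to compare against here.

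Your outline is the standard multivariate Stein's method argument and is almost certainly what \cite{Hella_2018} does. Two points deserve comment. First, condition (A2) is stated for $\nabla h$, not for $\nabla\phi$, so the passage ``directly bounded by $\sqrt N\,\tilde\rho(K)$ by condition (A2)'' hides a step: one writes the Stein solution via the Ornstein--Uhlenbeck semigroup, so that $\nabla\phi(w)=-\int_0^\infty e^{-s}\,\bE_Z[\nabla h(e^{-s}w+\sqrt{1-e^{-2s}}\,\Sigma_N^{1/2}Z)]\,ds$, and then (A2) applies for each fixed $s$ and $Z$ with $t=e^{-s}\in(0,1]$ and $v=\sqrt{1-e^{-2s}}\,\Sigma_N^{1/2}Z$. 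This is precisely why (A2) is formulated with the free parameters $v$ and $t$.

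Second, your step (a) is described a little loosely. The quartic factorisation bound in (A1) controls $\mu(\bar f^i\bar f^j\bar f^k\bar f^l)-\mu(\bar f^i\bar f^j)\mu(\bar f^k\bar f^l)$, not the decoupling of $\bar f^n\bar f^k$ from the nonlinear functional $D^2\phi(W_n)$ directly. The way (A1) enters is rather through a variance estimate: one shows that the random matrix $N^{-1}\sum_n\sum_{k\in[n]_{N,K}}\bar f^n\otimes\bar f^k$ concentrates around its mean (which is $\Sigma_N$ up to the tail $\sum_{i>K}\rho(i)$), and the quartic bounds control exactly the fourth-moment sums arising in that variance. The pair bound in (A1) also gives $\mu(|W-W_n|^2)=O((K+1)/N)$ rather than the naive $O((K+1)^2/N)$, which is what brings the Taylor remainder down to $(K+1)/\sqrt N$ instead of $(K+1)^2/\sqrt N$. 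With these refinements your sketch goes through.
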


If \(d=1\), we have another result of \cite{Hella_2018} establishing convergence to a normal distribution, where the error is estimated using the Wasserstein distance. Recall that given two random variables $X_1$ and $X_2$, the Wasserstein distance \(d_\mathscr{W}(X_1,X_2)\) between them is defined by
\beqn
d_\mathscr{W}(X_1,X_2) = \sup_{h\in\mathscr{W}}|\mu(h(X_2)) - \mu(h(X_2))|,
\eeqn
where
\beqn
\mathscr{W} = \{h:\bR\to\bR\,:\,|h(x) - h(y)| \le |x-y|\}
\eeqn
is the class of all $1$-Lipschitz functions.

\begin{thm}\label{main thm 1d}
 Let $(X,\cB,\mu)$ be a probability space and $(f^{i})_{i=0}^{\infty}$ a sequence of random variables with common upper bound $\|f\|_{\infty}$. Fix integers $N>0$ and $0\le K<N$. Suppose that the following conditions are satisfied.

\begin{itemize}
\item[(B1)]\label{B1} There exist constants \(C_2,C_4\) and a non-increasing function \(\rho : \, \bN \to \bR\) with \(\rho(0) = 1\), such that for all \(0 \le i \le j \le k \le l\),
\begin{align*}
|\mu(\bar{f}^{i}\bar{f}^{j})| \leq C_2\rho(j-i),
\end{align*}
\begin{align*}
&|\mu(\bar{f}^{i}\bar{f}^{j}\bar{f}^{k}\bar{f}^{l})| \le C_4\rho(\max\{j-i,l-k\}), \\
&|\mu(\bar{f}^{i}\bar{f}^{j}\bar{f}^{k}\bar{f}^{l}) - \mu(\bar{f}^{i}\bar{f}^{j})\mu(\bar{f}^{k}\bar{f}^{l})| \le C_4\rho(k-j).
\end{align*}
\item[(B2)]\label{B2} There exists a function \(\tilde\rho : \, \bN_0 \to \bR_+\) such that, given a differentiable $A:\bR\to\bR$ with $A'$ absolutely continuous and $\max_{0\le k\le 2}\|A^{(k)}\|_\infty \le 1$,
\begin{align*}
|\mu( \bar{f}^n A(W_{n}) ) | \le \tilde\rho(K)
\end{align*}
holds for all $0\le n < N$.
\item[(B3)]\label{B3}
$\sigma_{N}^{2}>0$.
\end{itemize}
Then the Wasserstein distance $d_\mathscr{W}(W,\sigma_{N}Z)$ is bounded from above by 

\beqn
C\left(\max\{\sigma_{N}^{-1},\sigma_{N}^{-2}\}\!\left(\frac{K+1}{\sqrt{N}} + \sum_{i= K+1}^{\infty}  \rho(i) \right) + \max\{1,\sigma_{N}^{-2}\}\sqrt N\tilde\rho(K)\right),
\eeqn
where
$C$
is independent of $N$ and $K$.
\end{thm}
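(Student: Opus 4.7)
The plan is to adapt the Stein-method argument behind Theorem \ref{thm:main} to the Wasserstein metric. Fix an arbitrary $1$-Lipschitz $h:\bR\to\bR$, and let $A = A_h$ denote the unique bounded solution of the one-dimensional Stein equation
\beqn
\sigma_N^{2} A'(w) - w A(w) = h(w) - \Phi_{\sigma_N}(h),
\eeqn
where $\Phi_{\sigma_N}(h) = \mu(h(\sigma_N Z))$. Standard estimates for the Stein solution associated to a Lipschitz test function (see e.g.\ the monograph of Chen, Goldstein and Shao) give, after rescaling $w \mapsto w/\sigma_N$, the bounds $\|A\|_\infty \le C$, $\|A'\|_\infty \le C\sigma_N^{-1}$, $\|A''\|_\infty \le C\sigma_N^{-2}$, with $A'$ absolutely continuous. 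In particular $\max_{0\le k\le 2}\|A^{(k)}\|_\infty \le C \max\{1,\sigma_N^{-2}\}$. Since $\mu(h(W)) - \Phi_{\sigma_N}(h) = \mu(\sigma_N^{2} A'(W) - W A(W))$, the Wasserstein distance is the supremum of the absolute value of this expression over such $h$.

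Following the proof of Theorem \ref{thm:main}, decompose $W = W_n + R_n$ with $R_n = N^{-1/2}\sum_{k\in[n]_{N,K}} f^k$, and Taylor-expand $A(W)$ around $W_n$ to second order. Substituting into $\mu(WA(W)) = N^{-1/2}\sum_n\mu(f^n A(W))$ produces three groups of terms: a local piece $N^{-1/2}\sum_n\mu(f^n A(W_n))$, a quadratic piece $N^{-1}\sum_n\sum_{k\in[n]_{N,K}}\mu(f^n f^k A'(W_n))$, and a cubic Taylor remainder controlled pointwise by $\tfrac12\|A''\|_\infty R_n^{2}$. The variance identity $\sigma_N^{2} = N^{-1}\sum_{i,j=0}^{N-1}\mu(\bar f^i\bar f^j)$ allows one to match the quadratic piece against $\sigma_N^{2}\mu(A'(W))$ arising from the Stein equation, modulo (a) out-of-window covariances $k\notin[n]_{N,K}$, (b) boundary effects from indices near $0$ or $N-1$, and (c) the replacement $A'(W_n)\rightsquigarrow A'(W)$.

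Each of these errors is dispatched by the hypotheses. The two-point bound in (B1), together with the monotonicity of $\rho$, controls the out-of-window covariances by $C\sum_{i>K}\rho(i)$; multiplied by $\|A'\|_\infty \le C\sigma_N^{-1}$ this gives the tail contribution. The boundary effect contributes $C(K+1)/\sqrt N$, again with the factor $\|A'\|_\infty$. The $A'(W_n)\rightsquigarrow A'(W)$ replacement costs, per term, $\|A''\|_\infty\,\mu(|R_n|) \le C\sigma_N^{-2}\sqrt{(K+1)/N}$. The cubic remainder, after applying the four-point bound in (B1) to $\mu(|f^n|R_n^{2})$, contributes $C\|A''\|_\infty (K+1)/\sqrt N \le C\sigma_N^{-2}(K+1)/\sqrt N$. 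Combining these yields a total of order $\max\{\sigma_N^{-1},\sigma_N^{-2}\}\bigl((K+1)/\sqrt N+\sum_{i>K}\rho(i)\bigr)$. Finally, the local piece is bounded, after renormalizing $A$ by the factor $\max_k\|A^{(k)}\|_\infty \le C\max\{1,\sigma_N^{-2}\}$ so that the renormalized function fits the hypothesis of (B2), by $C\sqrt N\max\{1,\sigma_N^{-2}\}\tilde\rho(K)$. Summing the two contributions produces the claimed bound.

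The main technical obstacle is the careful bookkeeping of the Stein-solution norms across the various error terms so as to separate the $\max\{\sigma_N^{-1},\sigma_N^{-2}\}$ prefactor from the $\max\{1,\sigma_N^{-2}\}$ prefactor: the gap reflects that $\|A\|_\infty$ stays bounded independently of $\sigma_N$ for Lipschitz $h$, while $\|A'\|_\infty$ and $\|A''\|_\infty$ blow up as $\sigma_N\to 0$. The covariance and remainder terms use only $\|A'\|_\infty$ and $\|A''\|_\infty$, whereas the local term (B2) uses all three norms simultaneously through the rescaling. A secondary difficulty is verifying that the telescoping expansion of $\sigma_N^{2}$ cancels against the quadratic Stein term up to precisely the claimed window-tail and boundary errors; this is the same computation that already appears in the proof of Theorem \ref{thm:main}, and relies only on the monotonicity and summability of $\rho$ furnished by (B1).
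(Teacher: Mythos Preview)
The paper does not actually prove this statement: both Theorem~\ref{thm:main} and Theorem~\ref{main thm 1d} are quoted from \cite{Hella_2018} (see the sentences introducing them in Section~\ref{Stein}), and no argument is supplied here. So there is no in-paper proof to compare against.

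That said, your outline is the correct one and is essentially the argument carried out in \cite{Hella_2018}. The ingredients are exactly as you describe: solve the Stein equation for a $1$-Lipschitz test function with target variance $\sigma_N^2$; use the standard bounds $\|A\|_\infty\le C$, $\|A'\|_\infty\le C\sigma_N^{-1}$, $\|A''\|_\infty\le C\sigma_N^{-2}$ (obtained by rescaling the unit-variance Stein solution); Taylor-expand $A(W)$ about $W_n$; match the first-order piece against $\sigma_N^2\mu(A'(W))$ via the covariance identity, with window-tail and boundary errors governed by the two-point bound in (B1); control the second-order remainder and the fluctuation of the random variance $N^{-1}\sum_n\sum_{k\in[n]}\bar f^n\bar f^k$ using the four-point bounds in (B1); and finally absorb the local term $N^{-1/2}\sum_n\mu(\bar f^n A(W_n))$ into (B2) after dividing $A$ by $\max_k\|A^{(k)}\|_\infty\le C\max\{1,\sigma_N^{-2}\}$. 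Your identification of why the two distinct prefactors $\max\{\sigma_N^{-1},\sigma_N^{-2}\}$ and $\max\{1,\sigma_N^{-2}\}$ arise is also correct: the former collects terms weighted by $\|A'\|_\infty$ or $\|A''\|_\infty$, while the latter comes from the simultaneous normalization of all three derivatives needed to invoke (B2).

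One point worth tightening if you write this out in full: the step you call ``the replacement $A'(W_n)\rightsquigarrow A'(W)$'' is not by itself enough to reduce the quadratic piece to $\sigma_N^2\mu(A'(W))$; after that replacement you still need to control the variance of the random quantity $N^{-1}\sum_n\sum_{k\in[n]_{N,K}}\bar f^n\bar f^k$, and this is where the third and fourth-moment bounds in (B1) are genuinely used (not only in the cubic remainder). Your sketch alludes to this only implicitly.
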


\subsection{Functional correlation decay, and proofs for the theorems.} We now return to the setting of intermittent maps. Given an admissible sequence of maps \((T_n)_{n\ge 1}\),  a result from \cite{aimino2015} guarantees that correlations decay polynomially with respect to any measure $\mu$ with density in $\cC_*(\beta_*)$. Next we recall a generalization of this result, established in \cite{leppanen2017}, which facilitates controlling integrals such as those appearing in conditions (A2) and (B2) of the previous two theorems.\\
\indent Given a function \(F: \, [0,1]^d \to \bR\), we denote by \(\text{Lip}(F;i)\) the quantity
\begin{align*}
\sup_{y_1,\ldots,y_d \in [0,1]}\sup_{a_i \neq b_i} \frac{|F(y_1,\ldots, y_{i-1}, a_i, y_{i+1},\ldots , y_d) - F(y_1,\ldots, y_{i-1}, b_i, y_{i+1},\ldots , y_d)|}{|a_i-b_i|},
\end{align*}
and say that \(F\) is Lipschitz continuous in the coordinate \(x_\alpha\), if \(\text{Lip}(F;\alpha) < \infty\).

\begin{thm}\label{thm:corr} Let \((T_n)_{n\ge 1}\) be an admissible sequence of maps. Let $k \ge 1$ be an integer, \(F: \, [0,1]^{k+1} \to \bR\) be a bounded function, and fix integers \(0  \le n_1 \le \ldots \le n_k\), \(1 \le l_1 < \ldots < l_p < k\). Suppose that \(F\) is Lipschitz continuous in the coordinates \(x_{\alpha}\), where \(1 \le \alpha \le l_p +1\),  and denote by \(H(x_0,\ldots, x_{p})\) the function
\begin{align*}
F(x_0,\widetilde{T}_{n_1}(x_0),\ldots, \widetilde{T}_{n_{l_1}}(x_0), \widetilde{T}_{n_{{l_1}+1}}(x_1),\ldots,\widetilde{T}_{n_{{l_2}}}(x_1), \ldots, \widetilde{T}_{n_{l_p+1}}(x_{p}),\ldots \widetilde{T}_{n_k}(x_{p})).
\end{align*}
Then, for any probability measures \(\mu,\mu_1,\ldots,\mu_p \) whose densities belong to \(\cC_*(\beta_*)\),
\begin{align*}
&\left| \int H(x,\ldots,x) \, d\mu(x) - \idotsint H(x_0,\ldots,x_{p}) \, d\mu(x_0)d\mu_1(x_1)\ldots \, d\mu_p(x_{p}) \right| \\\ 
&\le C(\Vert F \Vert_{\infty} + \max_{1 \le\alpha \le l_p+1} \textnormal{Lip}(F;\alpha))\sum_{i=1}^p \rho(n_{l_{i}+1}-n_{l_i}), 
\end{align*}
where \(\rho(n) = n^{-\frac{1}{\beta_*}+1}(\log n)^{\frac{1}{\beta_*}}\) for \(n \ge 2\), \(\rho(0) = \rho(1) = 1\), and \(C > 0\) is a constant depending only on \(\beta_*\).\\
\end{thm}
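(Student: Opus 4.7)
The plan is to prove this by a telescoping argument, reducing the estimate to $p$ applications of a single-split functional correlation bound for time-dependent intermittent cocycles. Define interpolating integrands
\begin{align*}
H^{(i)}(x_0,\ldots,x_i) = F\bigl(x_0, \widetilde{T}_{n_1}(x_0),\ldots,\widetilde{T}_{n_{l_1}}(x_0),\ldots,\widetilde{T}_{n_{l_i}}(x_{i-1}), \widetilde{T}_{n_{l_i+1}}(x_i),\ldots,\widetilde{T}_{n_k}(x_i)\bigr)
\end{align*}
for $i = 0,1,\ldots,p$, so that $H^{(0)}(x)=H(x,\ldots,x)$ is the diagonal integrand and $H^{(p)}=H$. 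Setting $I_i = \int H^{(i)}\,d\mu(x_0)\prod_{j=1}^{i}d\mu_j(x_j)$ (with $\mu_0=\mu$), the quantity to estimate equals $\sum_{i=1}^{p}(I_{i-1}-I_i)$, so it suffices to bound each summand by $C(\|F\|_\infty+\max_\alpha \mathrm{Lip}(F;\alpha))\,\rho(n_{l_i+1}-n_{l_i})$.

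For fixed $i$, freezing $x_0,\ldots,x_{i-2}$ and writing $\vec a$ for the resulting block-$1,\ldots,i-1$ arguments of $F$, the difference $I_{i-1}-I_i$ becomes
\begin{align*}
\int d\mu_{i-1}(x_{i-1})\Bigl[\,\Psi_{x_{i-1}}(x_{i-1})-\int \Psi_{x_{i-1}}(y)\,d\mu_i(y)\,\Bigr],
\end{align*}
where $\Psi_{x_{i-1}}(y)=F(\vec a,\widetilde{T}_{n_{l_{i-1}+1}}(x_{i-1}),\ldots,\widetilde{T}_{n_{l_i}}(x_{i-1}),\widetilde{T}_{n_{l_i+1}}(y),\ldots,\widetilde{T}_{n_k}(y))$. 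Since $\widetilde{T}_{n_{l_i+m}}(y)=T_{n_{l_i+m}}\circ\cdots\circ T_{n_{l_i}+1}\circ\widetilde{T}_{n_{l_i}}(y)$ for $m\ge 1$, the $y$-dependence factors through $z=\widetilde{T}_{n_{l_i}}(y)$; writing the tail as $\tilde B(z)$ and pushing the $y$-integration forward, both the coupled term and its decoupled average reduce to pairings of a bounded, Lipschitz-in-$z$ test function $\tilde F$ against cone densities that, after pushing through the remaining $n_{l_i+1}-n_{l_i}$ transfer operators, differ by a signed combination of cone elements.

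The crucial input is the memory-loss estimate for cocycles of intermittent maps (Theorem 2.6 of \cite{aimino2015}): for any $g_1,g_2\in\cC_*(\beta_*)$ with equal total mass,
\begin{align*}
\|\cL_{\alpha_n}\cdots\cL_{\alpha_1}(g_1-g_2)\|_{L^1}\le C\rho(n).
\end{align*}
Applied to the composition of length $n_{l_i+1}-n_{l_i}$ and dualised against $\tilde F$, this supplies the factor $\rho(n_{l_i+1}-n_{l_i})$. The Lipschitz hypothesis on $F$ in coordinates $1,\ldots,l_p+1$ is precisely what ensures that $\tilde F$ is Lipschitz in its first slot, with constant controlled by $\mathrm{Lip}(F;l_i+1)$, so the $L^1$-bound translates into the needed dual estimate.

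The main obstacle will be handling the prefix coupling at each step: within $\Psi_{x_{i-1}}(x_{i-1})$, the block-$i$ arguments $\widetilde{T}_{n_{l_{i-1}+1}}(x_{i-1}),\ldots,\widetilde{T}_{n_{l_i}}(x_{i-1})$ and the tail's starting point $\widetilde{T}_{n_{l_i}}(x_{i-1})$ are all driven by the same $x_{i-1}$, so a naive application of memory loss at this point would face a Dirac-type conditional density of $z$ given $x_{i-1}$, which lies outside $\cC_*(\beta_*)$. The resolution is to carry the $\mu_{i-1}$-integration through simultaneously with the push-forward, so that the cone density of $\widetilde{T}_{n_{l_i}*}\mu_{i-1}$ (which lies in $\cC_*(\beta_*)$ by the invariance $\cL_\alpha\cC_*(\beta_*)\subset\cC_*(\beta_*)$ for $\alpha\le\beta_*$) is what appears in the estimate. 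Verifying cone membership at every telescoping step is then routine and guarantees that the memory-loss bound applies uniformly along the chain.
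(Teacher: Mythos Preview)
This theorem is not proved in the present paper; it is quoted from \cite{leppanen2017} and used as a black box. So there is no in-paper proof to compare against, but one can still ask whether your plan would complete to a correct argument.

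The telescoping reduction to $p$ single-gap estimates and the identification of the memory-loss bound of \cite{aimino2015} as the engine are both correct, and they match the strategy of \cite{leppanen2017}. You also correctly name the central difficulty: in $I_{i-1}$ the prefix-block values $\widetilde T_{n_{l_{i-1}+1}}(x_{i-1}),\dots,\widetilde T_{n_{l_i}}(x_{i-1})$ and the seed $\widetilde T_{n_{l_i}}(x_{i-1})$ of the tail are driven by the same $x_{i-1}$. However, your resolution does not close this gap. ``Carrying the $\mu_{i-1}$-integration through'' would produce the cone density of $\widetilde T_{n_{l_i}*}\mu_{i-1}$ only if the integrand factored through $\widetilde T_{n_{l_i}}(x_{i-1})$; it does not, because the earlier prefix slots of $F$ still see $\widetilde T_{n_j}(x_{i-1})$ with $j<l_i$. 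Consequently $I_{i-1}$ is \emph{not} of the form $\int \tilde F(z)\,g(z)\,dm(z)$ with a fixed bounded $\tilde F$ and a single cone density $g$, and the $L^1$ memory-loss bound cannot be invoked as you describe. A related symptom is your account of where the Lipschitz hypothesis enters: for the memory-loss step the test function only needs to be bounded, so ``$\tilde F$ Lipschitz in its first slot'' is not what the argument requires. In fact the full set of Lipschitz conditions on coordinates $1,\dots,l_p+1$ is consumed on the \emph{density} side, whereas your sketch only calls on $\mathrm{Lip}(F;l_i+1)$ at step $i$.

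The missing ingredient (this is the substance of the proof in \cite{leppanen2017}) is a mechanism that converts the coupled integral $I_{i-1}$ into a pairing of a fixed bounded tail function against a \emph{signed combination} of cone elements at time $n_{l_i}$, with total $L^1$-mass $O(\|F\|_\infty+\max_\alpha\mathrm{Lip}(F;\alpha))$. This is obtained by iterating the fact quoted in this paper as Lemma~2.4 of \cite{leppanen2017}: a cone density multiplied by a Lipschitz observable and pushed forward by transfer operators is a difference of two cone elements with controlled mass. Peeling off the prefix coordinates one by one in this way is exactly why Lipschitz regularity is imposed on \emph{every} coordinate up to $l_p+1$. Once that representation is in hand, memory loss over the gap $n_{l_i+1}-n_{l_i}$ applies to each pair of cone elements and delivers the factor $\rho(n_{l_i+1}-n_{l_i})$.
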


\begin{proof}[Proof for Theorem~\ref{thm:int_multi}] It suffices to verify conditions (A1) and (A2) of Theorem \ref{thm:main}. We do this by applying Theorem \ref{thm:corr} . 

\begin{itemize}

\item[(A1)]{ We let \(\rho(n) = n^{-\frac{1}{\beta_*}+1}(\log n)^{\frac{1}{\beta_*}}\) for \(n \ge 2\), and \(\rho(0) = \rho(1) = 1\). Then, under the standing assumption \(\beta_* < 1/3\), we have \( \sum_{i=1}^{\infty} i\rho(i) < \infty\). If \(F: \, \bR^{n+1} \to \bR\), \(F(x_0,\ldots,x_{n}) = \bar{f}_{a_1}(x_1)\cdots \bar{f}_{a_{n}}(x_{n})\), where \(a_i \in \{1,\ldots ,d\}\), then \(\Vert F \Vert_{\infty} \le 2^n\Vert f \Vert_{\infty}^n \le 2^n\Vert f \Vert_{\text{Lip}}^n \) and \( \text{Lip}(F; \alpha) \le \Vert f \Vert_{\text{Lip}}^n\) for all \(\alpha =1,\ldots, n+1\). By Theorem \ref{thm:corr},
\begin{align*}
|\mu(\bar{f}_{\alpha}^i \bar{f}^j_{\beta})| 
&\le C(\beta_*)\Vert f \Vert_{\text{Lip}}^2 \rho(i-j), \\
|\mu(\bar{f}_{\alpha}^i \bar{f}_{\beta}^j \bar{f}_{\gamma}^k \bar{f}_{\delta}^l)| & \le  C(\beta_*)\Vert f \Vert_{\text{Lip}}^4 \min\{\rho(j-i),\rho(l-k)\},
\\
|\mu(\bar{f}_{\alpha}^i \bar{f}_{\beta}^j \bar{f}_{\gamma}^k \bar{f}_{\delta}^l)) - \mu(\bar{f}_{\alpha}^i \bar{f}_{\beta}^j)\mu(\bar{f}_{\gamma}^k \bar{f}_{\delta}^l)| & \le C(\beta_*) \Vert f \Vert_{\text{Lip}}^4\,\rho(k-j),
\end{align*} 
for some constant \(C(\beta_*) > 0\) depending only on the system $T_{\beta_*}$, whenever $0\le i \le j \le k \le l < N$, and $\alpha,\beta,\gamma,\delta\in\{1,\ldots ,d\}$ .} \smallskip
\item[(A2)]{For $0\le n < N$, $0\le t\le 1$ and $v\in\bR^d$, we have by Theorem \ref{thm:corr} the upper bound
\begin{align}\label{bound_a2}
|\mu( \bar{f}^n \cdot \nabla h(v + W^n t) ) | \le C(\beta_*) d^{2}(\Vert \nabla h \Vert_{\infty} \Vert f \Vert_{\text{Lip}} + \Vert f \Vert_{\text{Lip}}^2 \Vert D^2h \Vert_{\infty})\rho(K).
\end{align}
To see this, define
\begin{align*}
F(x_0,\ldots, x_{n-K},x_n, x_{n+K}, \ldots, x_{N-1}) = \bar{f}(x_n) \cdot \nabla h \left(v + \frac{1}{\sqrt{N}} \sum_{i \notin [n]_K} \bar{f}(x_i)t \right).
\end{align*}
Then \( \Vert F \Vert_{\infty} \le  2d\Vert f \Vert_{\infty} \Vert \nabla h \Vert_{\infty}\), and  for all \(\alpha \le N\),
\begin{align*}
\text{Lip}(F; \alpha) &\le d \Vert \nabla h \Vert_{\infty} \Vert f \Vert_{\text{Lip}} + d^{2} \Vert f \Vert_{\text{Lip}}^2 N^{-\frac12}\Vert D^2h \Vert_{\infty},
\end{align*}
so that Theorem \ref{thm:corr} is applicable with \(F\). The upper bound \eqref{bound_a2} now follows.}
\end{itemize}

By Theorem \ref{thm:main}, there is a constant \(C > 0\) independent of \(N\) such that
\begin{align*}
&|\mu(h(W)) - \Phi_{\Sigma}(h)| 
\le C  \! \left(\frac {K}{\sqrt{N}} + \sum_{i= K+1}^{\infty}  \rho(i)  + \sqrt{N}\rho(K) \right).
\end{align*}
For \(K \le \sqrt{N}\), \(\sum_{i \ge K +1} i^{1-\frac{1}{\beta_*}} = \cO(K^{2-\frac{1}{\beta_*}}) = \cO(\sqrt{N} K^{1-\frac{1}{\beta_*}}) \). Hence, we choose \(K = \round{N^{\beta_*}}\) so that \(\sqrt{N} K^{1-\frac{1}{\beta_*}} \approx K/\sqrt{N} \), and
\begin{align*}
&|\mu(h(W)) - \Phi_{\Sigma}(h)| \le C N^{\beta_* - \frac12}(\log N)^{\frac{1}{\beta_*}}.
\end{align*}
The proof for Theorem \ref{thm:int_multi} is complete.

\end{proof}

\begin{proof}[Proof for Theorem~\ref{thm:int}] The proof is almost the same as the previous one: using Theorem \ref{thm:corr}, one verifies conditions (B1) and (B2) of Theorem \ref{main thm 1d}, taking \(\rho(n) = n^{-\frac{1}{\beta_*}+1}(\log n)^{\frac{1}{\beta_*}}\) for \(n \ge 2\) and \(\rho(0) = \rho(1) = 1\), and $\widetilde{\rho} = \rho$. We omit the details.
\end{proof}

\section{Proofs for Theorems \ref{thm:qds_abs_1} and \ref{thm:qds_abs_2}}\label{more proofs}

We start with a couple of simple observations. For any \(\alpha \in \{1,\ldots,d\}\) and \(t \in [0,1]\), we denote \(\hat{f}_{\alpha,t} = f_{\alpha} - \hat{\mu}_t(f_{\alpha})\). 

\begin{lem}\label{lem:old_conditions} Let \((\mathbf{T},\gamma)\) be a QDS that satisfies conditions (I) and (II). Then, the following conditions hold for all functions \(F\) of the form \(F = f_{a} \cdot f_b^q \circ T_k \circ \cdots \circ T_1\) where \(T_i \in \mathbf{T}'\), \(a,b \in \{1,\ldots,d\}\), \(q \in \{0,1\}\).
\begin{itemize}
\item[(f1)] Whenever \(n,k,l\) are integers with \(0 \le l,k \le n\) and \(|k-l| \ge n_1\), 
\begin{align*}
|\mu(\bar{f}^{n,k}_{\alpha}\bar{f}^{n,l}_{\beta})| \le C|k-l|^{-\theta_1},
\end{align*}
where \(\theta_1 = \varphi\). Moreover, for all \(s \in [0,1]\) and \(k \ge n_1\),
\begin{align*}
|\hat{\mu}_s(\hat{f}_{\alpha,s} \hat{f}_{\beta,s} \circ \gamma_s^k) | \le C k^{-\theta_1}.
\end{align*}
\smallskip
\item[(f2)] For all \(r,s \in [0,1]\), \begin{align*}
|\hat{\mu}_r(F) - \hat{\mu}_s(F) | \le C|r-s|^{\theta_2},
\end{align*} 
where
\begin{align*}
\theta_2 = \frac{\varphi \psi}{1+\varphi}.
\end{align*}
\smallskip
\item[(f3)] Set 
\begin{align*}
\theta_3 = \frac{\varphi \psi}{\varphi + \psi + 1} \hspace{0.4cm} \text{and} \hspace*{0.4cm} \eta_3 = \frac{\psi}{\varphi + \psi + 1} < \frac13.
\end{align*}
Then, there is \(c_0 > 0\), such that for all \(s \in [0,1]\) with \( c_0 n^{\eta_3-1} < s \le 1\),  
\begin{align*}
| \mu_{n\round{ns}}(F) - \hat{\mu}_{s}(F) | \le C n^{-\theta_3}.
\end{align*}
\end{itemize}
 \end{lem}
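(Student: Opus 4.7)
The plan is to establish (f1), (f2), (f3) by controlled use of conditions (I) and (II), exploiting the invariance of $\hat\mu_t$ under $\gamma_t$ to recast everything into forms to which those conditions apply directly. For (f1), I would write $f^{n,k}_\alpha = f_\alpha \circ \cT_k$ with $\cT_k = T_{n,k} \circ \cdots \circ T_{n,1}$, view $\cT_l$ as an extension of $\cT_k$, and invoke (I) with $\nu^1 = \nu^2 = \mu$, $p=1$, $F = f_\beta$ to obtain the bound whenever $|k-l| \ge n_1$; the finitely many remaining cases are absorbed into $C$ via the uniform bound $|\mu(\bar f^{n,k}_\alpha \bar f^{n,l}_\beta)| \le 4\|f\|_\infty^2$. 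For the $\hat\mu_s$-statement, since $\hat\mu_s(\hat f_{\alpha,s}) = 0$ I rewrite the target as $\hat\mu_s(f_\alpha \cdot f_\beta \circ \gamma_s^k) - \hat\mu_s(f_\alpha)\hat\mu_s(f_\beta \circ \gamma_s^k)$, using $\gamma_s$-invariance of $\hat\mu_s$ to trade $\hat\mu_s(f_\beta)$ for $\hat\mu_s(f_\beta \circ \gamma_s^k)$, and then apply (I) with $\nu^1 = \nu^2 = \hat\mu_s$, $\cT_0 = \mathrm{id}$, $\cT_k = \gamma_s^k$.

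For (f2), invariance gives $\hat\mu_r(F) = \hat\mu_r(F \circ \gamma_r^k)$ and $\hat\mu_s(F) = \hat\mu_s(F \circ \gamma_s^k)$, so inserting the intermediate term $\hat\mu_s(F \circ \gamma_r^k)$ yields
\[
|\hat\mu_s(F) - \hat\mu_r(F)| \le |\hat\mu_s(F \circ \gamma_s^k - F \circ \gamma_r^k)| + |\hat\mu_s(F \circ \gamma_r^k) - \hat\mu_r(F \circ \gamma_r^k)|.
\]
The first term is bounded by $C k |r-s|^\psi$ through the second bound of (II) with $r_j \equiv r$, and the second by $C k^{-\varphi}$ through (I) with $\nu^1 = \hat\mu_s$, $\nu^2 = \hat\mu_r$, $p=0$, $\cT_0 = \mathrm{id}$, $\cT_k = \gamma_r^k$. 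Balancing via $k \asymp |r-s|^{-\psi/(\varphi+1)}$ produces the exponent $\theta_2 = \varphi\psi/(\varphi+1)$.

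For (f3), I would choose $k = \lceil n^{\eta_3} \rceil$, set $m = \round{ns} - k$, and factor $\mu_{n,\round{ns}}(F) = \mu_{n,m}(F \circ T_{n,m+k} \circ \cdots \circ T_{n,m+1})$, noting $\mu_{n,m} \in \cC$ by construction of that class. I then chain four approximation steps: (a) replace the $T_{n,j}$'s by $\gamma_{j/n}$'s using the first bound of (II), with error $O(k n^{-\psi})$; (b) switch the outer measure from $\mu_{n,m}$ to $\hat\mu_s$ via (I) applied to $F \circ \cT_k$ with $\cT_k = \gamma_{(m+k)/n} \circ \cdots \circ \gamma_{(m+1)/n}$, giving error $O(k^{-\varphi})$; (c) replace these composed $\gamma_{j/n}$'s by $\gamma_s^k$ using the second bound of (II), legitimate now because the measure is $\hat\mu_s$, with $\max_j |s - (m+j)/n| \le k/n$ yielding error $O(k^{1+\psi} n^{-\psi})$; (d) collapse via invariance $\hat\mu_s(F \circ \gamma_s^k) = \hat\mu_s(F)$. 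Balancing $k^{1+\psi} n^{-\psi} \asymp k^{-\varphi}$ gives $k \asymp n^{\psi/(1+\psi+\varphi)} = n^{\eta_3}$ and total error $n^{-\theta_3}$; the constraint $k \le \round{ns}$ is precisely what forces the hypothesis $s > c_0 n^{\eta_3 - 1}$.

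The main obstacle will be the asymmetry in condition (II): the perturbation bound in the $\gamma$-direction is only available against $\hat\mu_s$, which dictates a strict ordering in (f3) — the measure-swap (b) must precede the time-freezing (c), not the other way around. One must also check at each step that the intermediate observables retain the form $F \circ \cT_k$ permitted by conditions (I)--(II), which is automatic since any composition of maps in $\mathbf{T}'$ qualifies as $\cT_k$, and that the ambient measures $\mu_{n,m}$ and $\hat\mu_s$ remain in $\cC$, which follows from how $\cC$ is built from the initial measures by pushforwards under $\mathbf{T}'$.
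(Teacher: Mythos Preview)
Your proposal is correct and follows essentially the same approach as the paper. The only cosmetic difference is in (f3): the paper performs the measure swap via (I) first and only then replaces the $T_{n,j}$'s by $\gamma_{j/n}$'s (using the first bound of (II) with $\nu=\hat\mu_s$), whereas you do these two steps in the opposite order; since the first bound of (II) is valid for any $\nu\in\cC$, both orderings work and lead to the same error balance $K\asymp n^{\eta_3}$.
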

 
\begin{remark} We have
\begin{align*}
\min \{\theta_1 -1 ,\theta_2,\theta_3,\psi \} = \min \{\theta_1 -1, \theta_3\}.
\end{align*}
\end{remark}

\begin{proof}[Proof for Lemma \ref{lem:old_conditions}] (f1): This is a direct consequence of condition (I).  \\
\indent (f2): Assume that \(r \neq s\). By the second part of condition (II), for all \(n \ge 0\),
\begin{align*}
| (\gamma_r^n)_*\hat{\mu}_r (F) -  (\gamma_s^n)_*\hat{\mu}_r (F)| \le C n |r-s|^{\psi}.
\end{align*}
Hence,
\begin{align*}
&|\hat{\mu}_r(F) - \hat{\mu}_s(F) | \le | (\gamma_s^n)_*\hat{\mu}_r (F) -  (\gamma_s^n)_*\hat{\mu}_s (F)| + C n |r-s|^{\psi}.
\end{align*}
On the other hand, an application of condition (I) with \(p=0\) yields
\begin{align*}
&| (\gamma_s^n)_*\hat{\mu}_r (F) -  (\gamma_s^n)_*\hat{\mu}_s (F)| \le C n^{-\varphi}.
\end{align*}
Condition (f2) now follows by choosing \(n = \round{|r-s|^{-\frac{\psi}{1+\varphi}}}\). \\
\indent (f3): Let \(K \in \bN\) be such that \(\round{ns} \ge K \). Since \(\mu_{n,\round{ns}-K}, \hat{\mu}_s \in \cC\), condition (I) implies the upper bound
\begin{align*}
&|\mu_{n,\round{ns}-K} (F \circ T_{n,\round{ns}} \circ \cdots \circ T_{n,\round{ns}-K+1}) - \hat{\mu}_s (F \circ T_{n,\round{ns}} \circ \cdots \circ T_{n,\round{ns}-K+1}) | \\
&\le C K^{-\varphi}.
\end{align*}
On the other hand, the two bounds of condition (II) yield 
\begin{align*}
&| \hat{\mu}_s (F \circ T_{n,\round{ns}} \circ \cdots \circ T_{n,\round{ns}-K+1}) - \hat{\mu}_s (F \circ \gamma^K_s)| \\
&\le |\hat{\mu}_s (F \circ T_{n,\round{ns}} \circ \cdots \circ T_{n,\round{ns}-K+1}) - \hat{\mu}_s (F \circ \gamma_{\round{ns}/n} \circ \cdots \circ \gamma_{(\round{ns}-K+1)/n}) |\\
&+ |\hat{\mu}_s (F \circ \gamma_{\round{ns}/n} \circ \cdots \circ \gamma_{(\round{ns}-K+1)/n}) - \hat{\mu}_s (F \circ \gamma^K_s)| \\
&\le C( K n^{-\psi} + K(K/n)^{\psi}) \le Cn^{-\psi} K^{1 + \psi}.
\end{align*}
Hence,
\begin{align*}
&| \mu_{n\round{ns}}(F) - \hat{\mu}_{s}(F) | \\
&\le C K^{-\varphi} 
+ | \hat{\mu}_s (F \circ T_{n,\round{ns}} \circ \cdots \circ T_{n,\round{ns}-K+1}) - \hat{\mu}_s (F \circ \gamma^K_s)| \\
&\le C (K^{-\varphi} + n^{-\psi} K^{1 + \psi}).
\end{align*}
Condition (f3) then follows by choosing \(K = \round{n^{\frac{\psi}{\varphi + \psi + 1}}} \) . 
\end{proof}

\indent Recall that
\begin{align*}
\hat{\Sigma}_t(f)=\lim_{m\rightarrow \infty}\hat{\mu}_{t}\left[\frac{1}{\sqrt{m}}\sum_{k=0}^{m-1}\hat{f}_{t}\circ \gamma_{t}^{k}\otimes \frac{1}{\sqrt{m}}\sum_{k=0}^{m-1}\hat{f}_{t}\circ \gamma_{t}^{k}\right].
\end{align*}

\begin{lem}\label{lem:limitvar} Suppose conditions (I) and (II) hold. Then
\begin{align}\label{univ var2}
(\hat{\Sigma}_t)_{\alpha\beta}(f) = \hat{\mu}_t[\hat{f}_{\alpha,t} \hat{f}_{\beta,t}] + \sum_{k=1}^{\infty} \hat{\mu}_t[\hat{f}_{\alpha,t} \hat{f}_{\beta,t} \circ \gamma_t^k+\hat{f}_{\beta,t} \hat{f}_{\alpha,t} \circ \gamma_t^k],
\end{align}
 for all \(t \in [0,1]\), \(\alpha,\beta \in \{1,\ldots,d \}\). Moreover, for each \(\alpha,\beta \in \{1,\ldots,d\} \) the map \(t \mapsto (\hat{\Sigma}_t)_{\alpha\beta}(f)\) is Hölder continuous  with exponent $\psi (\varphi - 1)/(\varphi + 1)$.
\end{lem}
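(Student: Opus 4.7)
The plan is to separately establish the closed-form expression for $\hat{\Sigma}_t(f)$ and then extract the Hölder bound from the pointwise estimates supplied by Lemma~\ref{lem:old_conditions}. For the formula, I would expand the bilinear expression defining $(\hat{\Sigma}_t)_{\alpha\beta}(f)$ as
\beqn
\frac{1}{m}\sum_{i,j=0}^{m-1}\hat{\mu}_t\bigl[\hat{f}_{\alpha,t}\circ\gamma_t^i\cdot\hat{f}_{\beta,t}\circ\gamma_t^j\bigr],
\eeqn
use the $\gamma_t$-invariance of $\hat{\mu}_t$ to shift each summand by $\min(i,j)$, and reindex by $k=|i-j|$. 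This produces the Cesàro-type sum
\beqn
\hat{\mu}_t[\hat{f}_{\alpha,t}\hat{f}_{\beta,t}]+\sum_{k=1}^{m-1}(1-k/m)\bigl(\hat{\mu}_t[\hat{f}_{\alpha,t}\hat{f}_{\beta,t}\circ\gamma_t^k]+\hat{\mu}_t[\hat{f}_{\beta,t}\hat{f}_{\alpha,t}\circ\gamma_t^k]\bigr),
\eeqn
whose summands decay like $k^{-\varphi}$ with $\varphi>1$ by bound (f1) of Lemma~\ref{lem:old_conditions}; dominated convergence then yields formula~\eqref{univ var2}.

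For Hölder continuity, set $B_r(0)=\hat{\mu}_r[\hat{f}_{\alpha,r}\hat{f}_{\beta,r}]$ and, for $k\ge 1$, $B_r(k)=\hat{\mu}_r[\hat{f}_{\alpha,r}\hat{f}_{\beta,r}\circ\gamma_r^k]+\hat{\mu}_r[\hat{f}_{\beta,r}\hat{f}_{\alpha,r}\circ\gamma_r^k]$, so that $(\hat{\Sigma}_r)_{\alpha\beta}(f)=\sum_{k\ge 0}B_r(k)$. Expanding the centerings and using $\gamma_r$-invariance, $B_r(k)-B_s(k)$ is a combination of mean-product differences $\hat{\mu}_r(f_a)\hat{\mu}_r(f_b)-\hat{\mu}_s(f_a)\hat{\mu}_s(f_b)$, handled directly by (f2) applied to $F=f_a$, and two-point differences which I would split as
\begin{align*}
\hat{\mu}_r[f_\alpha\cdot f_\beta\circ\gamma_r^k]-\hat{\mu}_s[f_\alpha\cdot f_\beta\circ\gamma_s^k]&=(\hat{\mu}_r-\hat{\mu}_s)[f_\alpha\cdot f_\beta\circ\gamma_r^k]\\
&\quad+\hat{\mu}_s[f_\alpha\cdot(f_\beta\circ\gamma_r^k-f_\beta\circ\gamma_s^k)].
\end{align*}
The first piece is $O(|r-s|^{\theta_2})$ by applying (f2) to $F=f_\alpha\cdot f_\beta\circ\gamma_r^k$, which is of the admissible form with $T_i=\gamma_r$; crucially the constant in (f2) is $k$-independent since it depends on $F$ only through $\|F\|_\infty\le\|f\|_\infty^2$. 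The second piece is bounded by $Ck|r-s|^\psi$ using the second inequality of condition (II) with $p=1$, $F=f_\beta$, and $r_l=r$ for every $l$. Combined with the decay bound from (f1), this gives
\beqn
|B_r(k)-B_s(k)|\le C(|r-s|^{\theta_2}+k|r-s|^\psi),\qquad |B_r(k)|+|B_s(k)|\le Ck^{-\varphi}\text{ for }k\ge n_1.
\eeqn

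Splitting the sum $\sum_{k\ge 0}$ at an integer $K$ then produces
\beqn
\bigl|(\hat{\Sigma}_r)_{\alpha\beta}(f)-(\hat{\Sigma}_s)_{\alpha\beta}(f)\bigr|\le C\bigl(K|r-s|^{\theta_2}+K^2|r-s|^\psi+K^{1-\varphi}\bigr).
\eeqn
Balancing the first and third summands via $K\sim|r-s|^{-\theta_2/\varphi}$ and substituting $\theta_2=\varphi\psi/(1+\varphi)$, each of the three contributions simplifies to $|r-s|^{\psi(\varphi-1)/(\varphi+1)}$, which is the claimed exponent. The main obstacle is to arrange the decomposition so that (f2) and the perturbation bound from (II) both apply with constants independent of $k$: composing internally with $\gamma_r^k$ (rather than $\gamma_s^k$) preserves the admissible form for (f2), leaving the unavoidable linear-in-$k$ loss confined to the residual $f_\beta\circ\gamma_r^k-f_\beta\circ\gamma_s^k$, which the splitting then absorbs harmlessly.
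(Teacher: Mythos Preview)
Your argument is correct and follows essentially the same route as the paper: the Ces\`aro expansion together with the (f1) decay yields the series formula, and for the H\"older bound you combine (f2) with the second inequality of (II) to obtain $|B_r(k)-B_s(k)|\le C(|r-s|^{\theta_2}+k|r-s|^{\psi})$, then truncate at level $K$ and balance. Your choice $K\sim|r-s|^{-\theta_2/\varphi}$ is exactly the paper's $K=\lfloor|r-s|^{-\psi/(\varphi+1)}\rfloor$ once one substitutes $\theta_2=\varphi\psi/(1+\varphi)$, and all three summands indeed collapse to the claimed exponent.
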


\begin{proof} 
By definition
\beq
(\hat{\Sigma}_t)_{\alpha\beta}(f)= \lim_{m\rightarrow \infty} \frac1m \hat{\mu}_{t}\left[\sum_{k=0}^{m-1}\hat{f}_{\alpha,t} \circ \gamma_{t}^{k}\sum_{k=0}^{m-1}\hat{f}_{\beta,t}\circ \gamma_{t}^{k}\right].\label{eq:cov matr}
\eeq

A straightforward manipulation of \eqref{eq:cov matr} shows that 
\begin{align*}
(\hat{\Sigma}_t)_{\alpha\beta}(f) &= \hat{\mu}_t[\hat{f}_{\alpha,t}\hat{f}_{\beta,t}] +  \lim_{m \to \infty} \frac1m   \sum_{k=1}^{m-1} (m-k) \hat{\mu}_t[\hat{f}_{\alpha,t}\hat{f}_{\beta,t} \circ \gamma_{t}^{k}+ \hat{f}_{\beta,t} \hat{f}_{\alpha,t}  \circ \gamma_{t}^{k}]
\\
&= \hat{\mu}_t[\hat{f}_{\alpha,t}  \hat{f}_{\beta,t} ] +  \lim_{m \to \infty}  \sum_{k=1}^{m-1} \hat{\mu}_t[\hat{f}_{\alpha,t}  \hat{f}_{\beta,t}  \circ \gamma_{t}^{k} + \hat{f}_{\beta,t} \hat{f}_{\alpha,t}  \circ \gamma_{t}^{k}] 
\\
&- \lim_{m \to \infty} \frac1m   \sum_{k=1}^{m-1}k \hat{\mu}_t[\hat{f}_{\alpha,t} \hat{f}_{\beta,t} \circ \gamma_{t}^{k}+\hat{f}_{\beta,t}  \hat{f}_{\alpha,t}  \circ \gamma_{t}^{k}],
\end{align*}
where the limits exist by condition (f1). By another  application of condition (f1), we see that
\begin{align*}
\lim_{m \to \infty} \frac1m  \sum_{k=1}^{m-1} k| \hat{\mu}_t[\hat{f}_{\alpha,t}  \hat{f}_{\beta,t}  \circ \gamma_{t}^{k}+\hat{f}_{\beta,t}  \hat{f}_{\alpha,t}  \circ \gamma_{t}^{k}]| = 0,
\end{align*}
so that \eqref{univ var2} follows.\\
\indent Next we study the continuity of $\hat{\Sigma}_{t}(f)$. Let $\alpha,\beta\in\{1,...,d\}$. Conditions (f2) and (II) imply
\begin{align*}
\hat{\mu}_t[\hat{f}_{\alpha,t}  \hat{f}_{\beta,t}  \circ \gamma_t^k + \hat{f}_{\beta,t}  \hat{f}_{\alpha,t}  \circ \gamma_t^k] = \hat{\mu}_s[\hat{f}_{\alpha,s}  \hat{f}_{\beta,s}  \circ \gamma_s^k + \hat{f}_{\beta,s}  \hat{f}_{\alpha,s}  \circ \gamma_s^k] + \cO(|t-s|^{\theta_2} + k|t-s|^{\psi}).
\end{align*}
Hence,
\begin{align*}
|(\hat{\Sigma}_{t})_{\alpha\beta}(f) -(\hat{\Sigma}_{s})_{\alpha\beta}(f)|
& \le \sum_{k=0}^{K-1}| \hat{\mu}_t[\hat{f}_{\alpha,t}  \hat{f}_{\beta,t}  \circ \gamma_t^k + \hat{f}_{\beta,t}  \hat{f}_{\alpha,t}  \circ \gamma_t^k]- \hat{\mu}_s[\hat{f}_{\alpha,s}  \hat{f}_{\beta,s}  \circ \gamma_s^k+ \hat{f}_{\beta,s} \hat{f}_{\alpha,s}  \circ \gamma_s^k]|
\\
 &+ \sum_{k=K}^{\infty}|\hat{\mu}_t[\hat{f}_{\alpha,t}  \hat{f}_{\beta,t}  \circ \gamma_t^k+ \hat{f}_{\beta,t}  \hat{f}_{\alpha,t} \circ \gamma_t^k]- \hat{\mu}_s[\hat{f}_{\alpha,s}  \hat{f}_{\beta,s}   \circ \gamma_s^k+ \hat{f}_{\beta,s}   \hat{f}_{\alpha,s}  \circ \gamma_s^k]|
\\
&\le C\sum_{k=0}^{K-1}( k|t-s|^{\psi} + |t-s|^{\theta_2} ) +C\sum_{k=K}^{\infty}k^{-\varphi} \\
&\le C( K^{2}|t-s|^{\psi} + K|t-s|^{\theta_2} + K^{1-\varphi}),   
\end{align*}
where condition (f1) was used in the second last inequality and \(C > 0\) does not depend on ~\(t\) or ~\(s\). The latter claim of the lemma now follows by choosing $K=\round{|t-s|^{-\psi/(\varphi + 1) }}$.
\end{proof}

\subsection{Proof for Theorem \ref{thm:qds_abs_1}}
We first prove the following lemma.

\begin{lem}\label{secmoment_d} Assume conditions (I) and (II). Then, fixing any \(0 < \kappa < \eta_3 < \delta < \tfrac12\),
\begin{align*}
 &\mu[(\xi_n(t+h) - \xi_n(t))_{\alpha} (\xi_n(t+h) - \xi_n(t))_{\beta}] \\
 &=   \int_{t}^{t + h} [\hat{\Sigma}_s(f)]_{\alpha\beta} \,ds + h\cO(n^{\kappa (1-\varphi)}) + \cO(n^{-1+\delta+\kappa}).
\end{align*}
holds whenever whenever \(0 \le t \le t + h \le 1\) and \(\alpha, \beta \in \{1,\ldots, d\}\). Here the error term is uniform in \(t\) and \(h\).
  \end{lem}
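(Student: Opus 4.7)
The plan is to expand the second moment as a double sum of two-point correlations and then recognize the main contribution as a Riemann sum for $\int_t^{t+h}\hat{\Sigma}_s(f)\,ds$, after truncating the range of interaction. Writing $a=\lceil nt\rceil$ and $b=\lfloor n(t+h)\rfloor$, the piecewise-linear definition of $\xi_n$ gives $\xi_n(t+h)-\xi_n(t)=n^{-1/2}\sum_{k=a}^{b-1}\bar{f}^{n,k}+R_n$, where $\|R_n\|_\infty=\cO(n^{-1/2})$ comes from the two fractional boundary intervals; using (f1) on the few extra correlations carried by $R_n$, its contribution to the second moment is of order $n^{-1}$ and hence absorbed into the announced errors. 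The object to analyse is thus
\begin{equation*}
E:=n^{-1}\sum_{k,l=a}^{b-1}\mu(\bar{f}^{n,k}_{\alpha}\bar{f}^{n,l}_{\beta}).
\end{equation*}

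Fix $K=\lfloor n^\kappa\rfloor$ and split $E=E_{\le K}+E_{>K}$ depending on whether $|k-l|\le K$. For the far off-diagonal part, condition (f1) of Lemma~\ref{lem:old_conditions} gives $|\mu(\bar{f}^{n,k}_{\alpha}\bar{f}^{n,l}_{\beta})|\le C|k-l|^{-\varphi}$, so
\begin{equation*}
|E_{>K}|\le n^{-1}\cdot(b-a)\cdot 2C\sum_{j>K}j^{-\varphi}\le C'h\,n^{\kappa(1-\varphi)},
\end{equation*}
which produces the first stated error term.

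For $E_{\le K}$, pairs with $\min(k,k+j)<c_0 n^{\eta_3}$ are handled by the trivial bound $|\mu(\bar{f}^{n,k}_{\alpha}\bar{f}^{n,l}_{\beta})|\le 4\|f\|_\infty^2$, contributing $n^{-1}\cdot c_0 n^{\eta_3}\cdot K\le n^{-1+\delta+\kappa}$ thanks to $\eta_3<\delta$; this yields the second stated error. For the remaining ``bulk'' pairs with $k\ge c_0 n^{\eta_3}$ and $0\le j\le K$, set $s=k/n$ and push forward through $T_{n,1},\dots,T_{n,k}$ so that $\mu(\bar{f}^{n,k}_{\alpha}\bar{f}^{n,k+j}_{\beta})$ turns into a two-point correlation of $f_\alpha$ and $f_\beta$ under the measure $\mu_{n,k}=(T_{n,k}\circ\cdots\circ T_{n,1})_*\mu$ (and a product of one-point integrals involving $\mu_{n,k+j}$). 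Condition (f3) replaces both $\mu_{n,k}$ and $\mu_{n,k+j}$ by $\hat{\mu}_s$ (the latter after using (f2) to bridge the time shift $j/n\le n^{\kappa-1}$) at cost $\cO(n^{-\theta_3})$, while two applications of (II)---first to replace each $T_{n,l}$ by $\gamma_{l/n}$, then to replace each $\gamma_{l/n}$ by $\gamma_s$---replace the composition $T_{n,k+j}\circ\cdots\circ T_{n,k+1}$ by $\gamma_s^j$ at total cost $\cO(K^{1+\psi}n^{-\psi})$. Invariance of $\hat{\mu}_s$ under $\gamma_s$ then collapses the result cleanly to $\hat{\mu}_s(\hat{f}_{\alpha,s}\hat{f}_{\beta,s}\circ\gamma_s^j)$, and the symmetric analysis for $\mu(\bar{f}^{n,k+j}_{\alpha}\bar{f}^{n,k}_{\beta})$ produces $\hat{\mu}_s(\hat{f}_{\beta,s}\hat{f}_{\alpha,s}\circ\gamma_s^j)$.

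After these replacements $E_{\le K}$ equals, up to already-accounted errors, $n^{-1}\sum_{k\in I,\,k\ge c_0 n^{\eta_3}}B_k$, where $B_k=\hat{\mu}_{k/n}(\hat{f}_{\alpha,k/n}\hat{f}_{\beta,k/n})+\sum_{j=1}^K\hat{\mu}_{k/n}(\hat{f}_{\alpha,k/n}\hat{f}_{\beta,k/n}\circ\gamma_{k/n}^j+\hat{f}_{\beta,k/n}\hat{f}_{\alpha,k/n}\circ\gamma_{k/n}^j)$. Comparing with the formula of Lemma~\ref{lem:limitvar}, $B_k$ differs from $[\hat{\Sigma}_{k/n}(f)]_{\alpha\beta}$ by the $j$-tail $\sum_{j>K}$, bounded via (f1) by $\cO(K^{1-\varphi})$, which summed in $k$ contributes another $\cO(hn^{\kappa(1-\varphi)})$. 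The remaining Riemann sum $n^{-1}\sum_{k\in I}[\hat{\Sigma}_{k/n}(f)]_{\alpha\beta}$ approximates $\int_t^{t+h}[\hat{\Sigma}_s(f)]_{\alpha\beta}\,ds$ with error $\cO(hn^{-\psi(\varphi-1)/(\varphi+1)})$ by the H\"older regularity of $\hat{\Sigma}_s$ provided by Lemma~\ref{lem:limitvar}; the constraint $\kappa<\eta_3<\psi/(\varphi+1)$ makes this subsumed in $hn^{\kappa(1-\varphi)}$ as well. The main obstacle is the bookkeeping: the per-pair errors from (f3), from (II), from the $j$-tail, and from the Riemann approximation carry different exponents in $n,\kappa,\psi,\varphi$, and verifying that the hypotheses $\kappa<\eta_3<\delta<1/2$ make every one of them subdominant to the two error terms appearing on the right-hand side is the delicate technical point.
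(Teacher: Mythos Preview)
Your argument is essentially the paper's proof recast in discrete-sum language: the paper writes the second moment as a double \emph{integral} $n\iint_{[t,t+h]^2}\mu(\bar f^{n,\round{ns}}_\alpha\bar f^{n,\round{nr}}_\beta)\,dr\,ds$ and partitions the square into a thin diagonal strip $|r-s|\le a_n=n^{-1+\kappa}$ (minus two boundary buffers of width $b_n=n^{-1+\delta}$) plus its complement, then applies (f1), (f3), (II) and (f2) on the diagonal exactly as you do. Because the paper keeps the $s$-variable continuous and uses the identity $[\hat\Sigma_s]_{\alpha\beta}=n\int_{-\infty}^{\infty}\hat\mu_s(\hat f_{\alpha,s}\hat f_{\beta,s}\circ\gamma_s^{|\round{n(s+r)}-\round{ns}|})\,dr$, it obtains $\int_t^{t+h}[\hat\Sigma_s]_{\alpha\beta}\,ds$ directly and does not need your separate Riemann-sum step invoking the H\"older continuity from Lemma~\ref{lem:limitvar}.

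Two small points to tighten: (i) your boundary removal only discards $k<c_0 n^{\eta_3}$, but for $k$ within $K$ of either endpoint $a$ or $b$ the $j$-window is truncated, so your formula for $B_k$ is not literally valid there; the paper handles this by the two $b_n$-buffers, and in your discrete picture it costs an additional $\cO(n^{-1}K^2)=\cO(n^{-1+2\kappa})\le\cO(n^{-1+\delta+\kappa})$ which you should record; (ii) the verification that the four replacement errors $hn^{\kappa-\theta_3}$, $hn^{2\kappa-\psi}$, $hn^{\kappa(2+\psi)-\psi}$, $hn^{\kappa+(\kappa-1)\theta_2}$ are all dominated by $hn^{\kappa(1-\varphi)}$ is a short computation from $\kappa<\eta_3=\psi/(\varphi+\psi+1)$ that the paper carries out explicitly and you should too.
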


\begin{proof} Let \(h > 0\) and let \(n\) be sufficiently large so that \(n^{-1/2} \le h\). Let \(\eta_3 < 1/3\) and ~\(c_0\) be as in condition (f3) of Theorem \ref{lem:old_conditions}. We fix numbers \(\kappa, \delta \in (0,1)\) such that \(0 < \kappa < \eta_3 < \delta < \frac12 \),
 and denote \(a_n = n^{-1+ \kappa}\), and \(b_n = n^{-1+\delta}\). Then, we can partition \([t,t+h]^2 = P_n \cup Q_n \cup R_n \ \), where
\begin{align*}
P_n = \{ (s,r) \in [t,t+h]^2 \: : \: t + b_n \le s \le t + h - b_n \text{ and } |r-s| \le a_n \},
\end{align*}
\begin{align*}
Q_n = \{ (s,r) \in [t,t+h]^2 \: : \: |r-s| \le a_n \text{ and either } s < t + b_n \text{ or } s > t + h - b_n \},
\end{align*}
and
\begin{align*}
R_n = \{ (s,r) \in [t,t+h]^2 \: : \: |r-s| > a_n \}.
\end{align*}
Since \(m(Q_n) = \cO(a_nb_n)\),
\begin{align*}
\left| n\iint_{Q_n} \mu(\bar{f}^{n,\round{ns}}_{\alpha}\bar{f}^{n,\round{nr}}_{\beta}) \, dr \,ds \right| \le 4C \Vert f \Vert_{\infty}^2 na_nb_n \le  4C \Vert f \Vert_{\infty}^2 n^{-1 + \delta + \kappa}.
\end{align*}
On the other hand, when \(n^{\kappa} = na_n \ge n_1\), it follows by condition (f1) that
\begin{align*}
&\left| n \iint_{R_n} \mu(\bar{f}^{n,\round{ns}}_{\alpha} \bar{f}^{n,\round{nr}}_{\beta}) \, dr \,ds \right| \\
&= \left| n \int_t^{t+h-a_n} \int_{s+ a_n}^{t+h} \mu(\bar{f}^{n,\round{ns}}_{\alpha} \bar{f}^{n,\round{nr}}_{\beta}) + \mu(\bar{f}^{n,\round{ns}}_{\beta} \bar{f}^{n,\round{nr}}_{\alpha}) \, dr \,ds \right| \\
&\le  2n C \int_t^{t+h-a_n} \int_{s+ a_n}^{t+h} (nr-ns)^{-\theta_1} \, dr \, ds  
\le 2hC n^{1-\theta_1} \frac{1}{\theta_1 -1} a_n^{1-\theta_1} 
= \frac{2hC}{\theta_1 -1} n^{\kappa (1-\theta_1)}.
\end{align*}
Hence, only the contribution from the diagonal \(P_n\) is significant as \(n \to \infty\):
\begin{align*}
& n \iint_{[t,t+h]^2} \mu(\bar{f}^{n,\round{ns}}_{\alpha} \bar{f}^{n,\round{nr}}_{\beta}) \, dr \,ds = n \iint_{P_n} \mu(\bar{f}^{n,\round{ns}}_{\alpha} \bar{f}^{n,\round{nr}}_{\beta}) \, dr \,ds +  \cO( hn^{\kappa (1-\theta_1)} +  n^{-1 + \delta + \kappa}),
\end{align*}
where the error is uniform in \(t\) and \(h\). For all \((s,r) \in P_n\), we have the lower bound
\begin{align*} r &\ge s - a_n \ge t + b_n - a_n \ge b_n - a_n = (n^{\delta-\eta_3} - n^{\kappa - \eta_3})n^{-1+\eta_3} \\
&\ge (n^{\delta-\eta_3} - 1)n^{-1+\eta_3} > c_0n^{-1+\eta_3},
\end{align*} 
when \(n^{\delta-\eta_3} > 1 + c_0\). For such \(n\) and for all \(r \in (s-a_n, s+ a_n)\), conditions (f2) and (f3) imply the bound
\begin{align*}
& | \mu(f^{n\round{nr}}_{\alpha}) - \hat{\mu}_{s}(f_{\alpha}) | 
\le C ( n^{-\theta_3} +  |r-s|^{\theta_2}) 
\le C( n^{-\theta_3} + n^{(\kappa -1)\theta_2}),
\end{align*}
which continues to hold if \(\beta\) is substituted for \(\alpha\). It follows that
\begin{align*}
& n \int_{s - a_n}^{s+a_n} \mu(\bar{f}^{n,\round{ns}}_{\alpha}\bar{f}^{n,\round{nr}}_{\beta}) \, dr \\
&= n \int_{s - a_n}^{s+a_n} \mu(f^{n,\round{ns}}_{\alpha} f^{n,\round{nr}}_{\beta}) - \hat{\mu}_s(f_{\alpha})\hat{\mu}_s(f_{\beta}) \, dr  
+ \cO (n^{\kappa - \theta_3} + n^{\kappa + (\kappa-1)\theta_2}).
\end{align*}
Next we split the domain of integration \([s-a_n, s+a_n] = [s-a_n,s] \cup [s,s+a_n] \), and consider the right half. Denoting \(c_n = \frac1n (1 - \{ns\}) \), we have
\begin{align*}
&n \int_s^{s+a_n} \mu(f^{n,\round{ns}}_{\alpha} f^{n,\round{nr}}_{\beta}) \, dr = n \int_0^{a_n} \mu(f^{n,\round{ns}}_{\alpha} f^{n,\round{n(s+r)}}_{\beta}) \, dr \\
&= c_n n \mu_{n,\round{ns}}(f_{\alpha}f_{\beta}) + n \int_{c_n}^{a_n} \mu_{n,\round{ns}}(f_{\alpha} f_{\beta}\circ T_{n,\round{n(s+r)}} \circ \cdots \circ T_{n,\round{ns} +1} ) \, dr.
\end{align*}
By condition (f3),
\begin{align*}
 c_n n \mu_{n,\round{ns}}(f_{\alpha}f_{\beta}) = c_n n \hat{\mu}_s(f_{\alpha}f_{\beta}) + \cO(n^{-\theta_3}).
\end{align*}
Another application of condition (f3) and two applications of condition (II) yield
\begin{align*}
&n \int_{c_n}^{a_n} \mu_{n,\round{ns}}(f_{\alpha} f_{\beta}\circ T_{n,\round{n(s+r)}} \circ \cdots \circ T_{n,\round{ns} +1} ) \, dr \\
&= n \int_{c_n}^{a_n} \mu_{n,\round{ns}}(f_{\alpha} f_{\beta} \circ \gamma_{\round{n(s+r)}/n} \circ \cdots \circ \gamma_{(\round{ns}+1)/n} ) \, dr + n \int_{c_n}^{a_n} \cO( r n^{1-\psi} ) \, dr \\
&= n \int_{c_n}^{a_n} \hat{\mu}_s( f_{\alpha} f_{\beta} \circ \gamma_{\round{n(s+r)}/n} \circ \cdots \circ \gamma_{(\round{ns}+1)/n} ) \, dr   + \cO (na_n n^{-\theta_3}) + \cO(n^{2\kappa - \psi}) \\
&= n \int_{c_n}^{a_n} \hat{\mu}_s(f_{\alpha} f_{\beta} \circ \gamma_{s}^{\round{n(s+r)}-\round{ns}}) \, dr   + n^2 \int_{c_n}^{a_n} \cO(r^{1+\psi}) \, dr + \cO ( n^{\kappa -\theta_3}) + \cO(n^{ 2\kappa - \psi}) \\
&= n \int_{c_n}^{a_n} \hat{\mu}_s(f_{\alpha} f_{\beta} \circ \gamma_{s}^{\round{n(s+r)}-\round{ns}}) \, dr   + \cO (n^{- \psi + \kappa (2+ \psi)}) + \cO ( n^{\kappa -\theta_3}).
\end{align*}
Hence,
\begin{align*}
&n \int_s^{s+a_n} \mu(f^{n,\round{ns}}_{\alpha} f^{n,\round{nr}}_{\beta}) \, dr \\
&= n \int_{0}^{a_n} \hat{\mu}_{s}(f_{\alpha} f_{\beta}\circ \gamma_s^{\round{n(s+r)} - \round{ns}})  \, dr + \cO (n^{- \psi + \kappa (2+ \psi)}) + \cO ( n^{\kappa -\psi}).
\end{align*}
A similar argument shows
\begin{align*}
 n \int_{s-a_n}^{s} \mu(f_{n,\round{ns}} f_{n,\round{nr}}) \, dr = n \int_{-a_n}^{0} \hat{\mu}_{s}(f_{\alpha} f_{\beta}\circ \gamma_s^{-\round{n(s+r)} + \round{ns}}) \, dr + \cO (n^{- \psi + \kappa (2+ \psi)}) + \cO ( n^{\kappa -\theta_3}).
\end{align*}
Since
\begin{align*}
[\hat{\Sigma}_s(f)]_{\alpha\beta} = n \int_{- \infty}^{\infty} \hat{\mu}_{s}(\hat{f}_{\alpha,s} \hat{f}_{\beta,s}  \circ \gamma_s^{|\round{n(s+r)} - \round{ns}|})\, dr,
\end{align*}
the foregoing estimates together with condition (f1) imply
\begin{align*}
&n \int_{s - a_n}^{s+a_n} \mu(\bar{f}^{n,\round{ns}}_{\alpha}\bar{f}^{n,\round{nr}}_{\beta}) \, dr \\
&= n \int_{- a_n}^{a_n} \hat{\mu}_{s}(\hat{f}_{\alpha,s}  \hat{f}_{\beta,s} \circ \gamma_s^{|\round{n(s+r)} - \round{ns}|})\, dr + \cO (n^{- \psi + \kappa (2+ \psi)})  +  \cO (n^{\kappa - \theta_3} + n^{\kappa + (\kappa-1)\theta_2}) \\
&= n \int_{- \infty}^{\infty} \hat{\mu}_{s}(\hat{f}_{\alpha,s} \hat{f}_{\beta,s}  \circ \gamma_s^{|\round{n(s+r)} - \round{ns}|})\, dr  + \cO(n^{\kappa (1-\theta_1)}) + \cO (n^{- \psi + \kappa (2+ \psi)} + n^{\kappa - \theta_3} + n^{\kappa + (\kappa-1)\theta_2}).
\end{align*}
Since \(\kappa < \eta_3 = \psi/(\varphi + \psi + 1)\), it follows that 
\begin{align*}
\kappa (1-\theta_1) = \max \{\kappa (1-\theta_1), -\psi + \kappa (2 + \psi), \kappa - \theta_3, \kappa + (\kappa-1)\theta_2\}.
\end{align*}
We conclude that given any \(0 < \kappa < \eta_3 < \delta < \frac12 \), 
\begin{align*}
&n \int_t^{t+h} \int_t^{t+h} \mu(\bar{f}^{n,\round{ns}}_{\alpha} \bar{f}^{n,\round{nr}}_{\beta}) \, dr \,ds \\
&=n \int_{t+b_n}^{t + h - b_n}\int_{s-a_n}^{s+a_n} \mu(\bar{f}^{n,\round{ns}}_{\alpha} \bar{f}^{n,\round{nr}}_{\beta}) \, dr \,ds +  \cO( hn^{\kappa (1-\theta_1)} +  n^{-1 + \delta + \kappa}) \\
&= \int_{t}^{t + h} [\hat{\Sigma}_s(f)]_{\alpha\beta} \,ds + h\cO(n^{\kappa (1-\theta_1)}) + \cO(n^{-1+\delta+\kappa}).
\end{align*}
This completes the proof for Lemma \ref{secmoment_d}.
\end{proof}

Recall that $\Sigma_{n,t} = \mu [\xi_n(t) \otimes \xi_n(t)]$. In Lemma \ref{secmoment_d}, replacing $t$ with $0$ and $h$ with $t$ yields
\beqn
[\Sigma_{n,t}]_{\alpha\beta}=\mu[(\xi_n(t))_{\alpha} (\xi_n(t))_{\beta}] =  [\Sigma_{t}]_{\alpha\beta}  +  \cO (tn^{\kappa(1- \varphi)} + n^{-1 + \delta + \kappa}).\label{covariance difference}
\eeqn

This holds especially, when $tn\in \bN$, $i.e.$, $t=\lceil nt \rceil/n$. Let $0<\epsilon< \max\{1/6,\eta_{3}(\varphi-1)/2 \}$. Recall that $\eta_{3}=\psi/(\varphi + \psi + 1) < 1/3$. Choose $\kappa=\eta_{3}+\epsilon(1-\varphi)^{-1}$ and $\delta= 1/2-\epsilon$. We leave it to the reader to check that with these choices $0 < \kappa < \eta_3 < \delta < \tfrac12$. Now applying Lemma \ref{secmoment_d} gives  $[\Sigma_{n,t}]_{\alpha\beta}- [\Sigma_{t}]_{\alpha\beta} =\cO (tn^{\kappa(1- \varphi)} + n^{-1 + \delta + \kappa})= \cO (n^{\eta_{3}(1- \varphi)+\epsilon} + n^{-1 + 1/2-\epsilon + \eta_{3}+\epsilon(1-\varphi)^{-1}})= \cO (n^{\eta_{3}(1- \varphi)+\epsilon} + n^{-1/6})=\cO (n^{(\psi-\varphi\psi)/(\varphi + \psi + 1)+\epsilon} + n^{-1/6})$. 

Therefore for all $\alpha,\beta\in \{1,2,...,d \}$ it holds
$
[\Sigma_{n,t}]_{\alpha\beta}-[\Sigma_{t}]_{\alpha\beta} \le \cO(n^{\max\{(\psi-\varphi\psi)/(\varphi + \psi + 1)+\epsilon,-1/6\}}).
$
This completes the proof for Theorem \ref{thm:qds_abs_1}.

\subsection{Proof for Theorem \ref{thm:qds_abs_2}}

Let \(h: \, \bR^d \to \bR\) be Lipschitz, $\epsilon>0$ and
assume that conditions (I)--(IV) hold.

Let $n\in \bN$ be arbitrary and $t\ge t_{0}$. 
As in \cite{Hella_2018}, we split $\mu\left[h(\xi_{n}(t))\right]-\Phi_{\Sigma_{t}}(h)$ into four terms, whose absolute values can be controlled by a function of $n$:
\beq
\left|\mu\left[h(\xi_{n}(t))\right]- \mu\left[h\left(\xi_{n}\left(\lceil nt\rceil/n\right)\right)\right] \right|,\label{eq:multiv 1}
\eeq

\beq
\left|\mu\left[h\left(\xi_{n}\left(\lceil nt\rceil/n\right)\right)\right] - \Phi_{\Sigma_{n,\lceil nt\rceil/n}}(h)\right|,\label{eq:multiv 2}
\eeq

\beq
\left|\Phi_{\Sigma_{n,\lceil nt\rceil/n}}(h)-\Phi_{\Sigma_{\lceil nt\rceil/n}}(h)\right|,\label{eq:multiv 3}
\eeq
and 
\beq
\left|\Phi_{\Sigma_{\lceil nt\rceil/n}}(h)-\Phi_{\Sigma_{t}}(h)\right|.\label{eq:multiv 4}
\eeq

\textbf{Term \eqref{eq:multiv 1}.} As in \cite{Hella_2018}, the boundedness of $f$ implies the uniform bound
$\eqref{eq:multiv 1}\le Cn^{-\frac{1}{2}}$.

\textbf{Term \eqref{eq:multiv 2}.}
 By condition (III),
\beqn
 \left|\mu\left[h(\xi_{n}(\lceil nt\rceil/n))\right]- \Phi_{\Sigma_{n,\lceil nt\rceil/n)}}(h)\right|  \le Cn^{-\zeta}. 
\eeqn
\indent\textbf{Term \eqref{eq:multiv 3}.}
Let $Z\sim \cN(0,I_{d\times d})$, where $\cN(0,I_{d\times d})$ is a standard $d$-dimensional normal distribution. Given a matrix $A \in \bR^{d\times d}$, we let $\lambda_{1}(A)$ denote the smallest eiqenvalue of $A$, and set $|A| = \max_{1 \le \alpha, \beta \le d} |A_{\alpha\beta}|$. If $\Sigma_{1}$ and $\Sigma_{2}$ are positive definite $d\times d$ matrices, then the following bound computed in \cite{Hella_2018} holds:
\beq
\left|\Phi_{\Sigma_{1}}(h)-\Phi_{\Sigma_{2}}(h)\right|\le  \operatorname{Lip}_{d}(h)\bE|Z|_{d}\frac{ d\left|\Sigma_{1}-\Sigma_{2}\right|}{\sqrt{\lambda_1\left(\Sigma_{1}\right)}+\sqrt{\lambda_1\left(\Sigma_{2}\right)}}. \label{almost final estimate}
\eeq
Here
\beqn
\operatorname{Lip}_{d}(h)=\sup_{x,y\in\bR^d, x\neq y}\frac{|h(x)-h(y)|}{|x-y|_{d}}\quad\text{ and }\quad |x|_{d}=\sqrt{x_{1}^{2}+...+x_{d}^{2}}.
\eeqn
 
 Condition (IV) implies that  $\hat{\Sigma}_{t_{0}}$ is positive definite. For otherwise there exists a unit vector $v\in\bR^d$ such that $v\cdot \hat{\Sigma}_{t_{0}} v = 0$. Defining $f_v = v\cdot \hat{f}_t$, we have $0 = v\cdot \Sigma v = \hat{\mu}_t(f_v\,f_v) + 2\sum_{n=1}^{\infty} \hat{\mu}_t(f_v \,f_v\circ \gamma_t^n)$. It follows from \cite{Robinson_1960,Leonov_1961} that there exists a function $g_v\in L^2(\mu)$ such that $f_v = g_v - g_v\circ T$, i.e., $\hat{f}_t$ is a coboundary in the direction~$v$, which is a contradiction. 
 
\indent Since $s\mapsto \hat{\Sigma}_{s}$ is Hölder continuous it follows from the positive definiteness of $\hat{\Sigma}_{t_{0}}$  that $\hat{\Sigma}_{s}$ is positive definite for all $s$ is in some neighbourhood of $t_{0}$. Thus $\Sigma_{\lceil nt\rceil/n}=\int_{0}^{\lceil nt\rceil/n} \hat{\Sigma}_{s}ds$ is positive definite, when  $ t\ge t_{0}$. As stated in \cite{Hella_2018}, we have $\lambda_{1}(\Sigma_{t})\ge \lambda_{1}(\Sigma_{t_{0}}) $. 
 
Let $M$ be $d\times d$ matrix, we write
\beq
\|M\|=\sup_{v\in \bR^{d}\setminus \{0\}}\frac{|Mv|_{d}}{|v|_{d}}. \label{spectral norm}
\eeq
 The following fact from the same paper essentially states that small perturbations on a positive definite matrix can not change its smallest eigenvalue too much.

\textbf{Matrix fact.} Let $M$ be a positive definite matrix satisfying $\lambda_{1}(M)\ge C_{l}$ and $\|M\|\le C_{u}$ for some $0<C_{l}<C_{u}$. Then there exists $\delta= \delta(C_{l},C_{u})>0$ such that if $\tilde{M}$ is positive semi-definite and $|M-\tilde{M}|<\delta$, then $\lambda_{1}(\tilde{M})\ge C_{l}/2$ and especially $\tilde{M}$ is positive definite.

Obviously $\Sigma_{n,\lceil nt\rceil/n}$ is at least positive semidefinite, since it is a covariance matrix. There exists $C_{u}$ such that $\|\Sigma_{t}\|\le C_{u}$ for all $t\ge t_{0}$. Choose $C_{l}=\lambda_{1}(\Sigma_{t_{0}})$. The Matrix fact yields $\delta>0$ such that if $t\ge t_{0}$ and $|\Sigma_{n,t}-\Sigma_{t}| < \delta$, then $\Sigma_{n,t}$ is positive definite.  By Theorem \ref{thm:qds_abs_1} we have $|\Sigma_{n,t}-\Sigma_{t}|\le C n^{\max\{(\psi-\varphi\psi)/(\varphi + \psi + 1)+\epsilon,-1/6\}} $ uniformly. Therefore, there exists ~$n_{0}\in \bN$ such that $\Sigma_{n,t}$ is positive definite for every $t\ge t_{0}$ and $n\ge n_{0}$. 
   
Now \eqref{almost final estimate} is applicable  and Theorem \ref{thm:qds_abs_1} yields
\begin{align}
&\left|\Phi_{\Sigma_{n, \lceil nt\rceil/n}}(h)-\Phi_{\Sigma_{\lceil nt\rceil/n}}(h)\right| \notag \\
&\le  C\left|\Sigma_{n,\lceil nt\rceil/n}-\Sigma_{\lceil nt\rceil/n}\right| \le C n^{\max\{(\psi-\varphi\psi)/(\varphi + \psi + 1)+\epsilon,-1/6\}}\label{eq: n ge n_0},
\end{align}
for every $t\ge t_{0}$ and $n\ge n_{0}$. Since $\sup_{t\ge t_{0}} \left|\Phi_{\Sigma_{n, \lceil nt\rceil/n}}(h)-\Phi_{\Sigma_{\lceil nt\rceil/n}}(h)\right|$ is bounded for every ~$n\in \bN$, we can choose large enough $C$ such that \eqref{eq: n ge n_0} holds for all $n\ge 1$.

\textbf{Term \eqref{eq:multiv 4}.} We have $|\Sigma_{\lceil nt\rceil/n}- \Sigma_{t}| = | \int_{t}^{\lceil nt\rceil/n} \hat{\Sigma}_{s}ds| \le Cn^{-1} $. Note that $\Sigma_{\lceil nt\rceil/n}$ and $\Sigma_{t}$ are positive definite. Thus \eqref{almost final estimate} yields
\beq
\left|\Phi_{\Sigma_{\lceil nt\rceil/n}}(h)-\Phi_{\Sigma_{t}}(h)\right|\le Cn^{-1}.
\eeq

The result of Theorem \ref{thm:qds_abs_2} now follows from the estimates for the terms \eqref{eq:multiv 1}--\eqref{eq:multiv 4}.

\bibliography{qds}{}
\bibliographystyle{plainurl}

\end{document}